\def\frk{\mathfrak}               
\def\Phi{{\frk N}}
\def\opn#1#2{\def#1{\operatorname{#2}}} 
\opn\con{conv} \opn\relint{relint} \opn\vol{vol} \opn\heit{ht} 
\newtheorem*{Ack}{Acknowledgments}
\numberwithin{equation}{section}
\newtheorem{theorem}{Theorem}
\newtheorem{proposition}[theorem]{Proposition}
\newtheorem{lemma}[theorem]{Lemma}
\newtheorem{cor}[theorem]{Corollary}
\newtheorem{example}[theorem]{Example}
\newtheorem{remark}[theorem]{Remark}
\newtheorem{definition}[theorem]{Definition}
\numberwithin{theorem}{section}
\newcommand{\card}[1]{{\mid\! #1 \!\mid}}
\def\vol{{\rm vol}}
\def\lra{\Leftrightarrow}
\DeclareMathOperator{\codeg}{codeg}
\DeclareMathOperator{\Vol}{Vol}
\def\ra{{\Rightarrow}}
\def\Pb{{\overline{P}}}
\def\ehr{{\rm ehr}}
\def\iZ{{\intr_\Z}}
\newcommand\mcd{\mathrm{mcd}}
\newcommand\md{\mathrm{md}}
\newcommand\MV{\mathrm{MV}}
\newcommand\R{\mathbb{R}}
\newcommand\Z{\mathbb{Z}}
\newcommand\C{\mathbb{C}}
\newcommand\conv{\mathrm{conv}}
\newcommand\intr{\mathrm{int}}
\begin{document}

\title[The mixed degree of families of lattice polytopes]{The mixed degree of families of lattice polytopes}

\author{Benjamin Nill}
\address{
Benjamin Nill, Fakult\"at f\"ur Mathematik, 
Otto-von-Guericke-Universit\"at Magdeburg, 
Universit\"atsplatz 2, 
39106 Magdeburg, Germany
}

\email{benjamin.nill@ovgu.de}

\keywords{mixed volume, Ehrhart polynomials, lattice polytopes}

\begin{abstract}The degree of a lattice polytope is a notion in Ehrhart theory that was studied quite intensively over the previous years. It is well-known that a lattice polytope has normalized volume one if and only if its degree is zero. Recently, Esterov and Gusev gave a complete classification result of families of $n$ lattice polytopes in $\R^n$ whose mixed volume equals one. Here, we give a reformulation of their result involving the novel notion of a mixed degree that generalizes the degree similar to how the mixed volume generalizes the volume. We discuss and motivate this terminology, and explain why it extends a previous definition of Soprunov. We also remark how a recent combinatorial result due to Bihan solves a related problem posed by Soprunov.
\end{abstract}

\maketitle

~\vspace{-0.7cm}\section{Definitions and motivation}

\subsection{Introduction} Lattice polytopes in $\R^n$ are called {\em hollow} (or {\em lattice-free}) \cite{NillZiegler, Santos} if they have no lattice points (i.e., elements in $\Z^n$) in their relative interiors. In this paper, we initiate the study of large families of lattice polytopes with hollow Minkowski sums. We observe that such a family can consist of at most $n$ elements (Proposition~\ref{nonneg}). In Theorem~\ref{md0-low}, we deduce from the main result in \cite{EG12} that a family of $n$ lattice polytopes in $\R^n$ has mixed volume one if and only if the Minkowski sums of all subfamilies are hollow. In order to measure the `hollowness' of a family of lattice polytopes, we introduce the mixed degree of a family of lattice polytopes. Our goal is to convince the reader that this is a worthwhile to study invariant of a family of lattice polytopes that naturally generalizes the much-studied notion of the degree of a lattice polytope in a manner similar to how the mixed volume generalizes the normalized volume (see Subsection~\ref{sec:ehrhart}). As first positive evidence for this claim, we show the nonnegativity of the mixed degree (Subsection~\ref{sec:nonneg}), a generalization of the nonnegativity of the degree, and the characterization of mixed degree zero by mixed volume one (Subsection~\ref{sec:mixedzero}) in analogy to the characterization of degree zero by normalized volume one. We will also explain how the definition given here generalizes an independent definition of Soprunov (Subsection~\ref{soprunov-sec}).

\subsection{Basic definitions} Let us recall that a {\em lattice polytope} $P \subset \R^n$ is a polytope whose vertices are elements of the lattice $\Z^n$. Two lattice polytopes are {\em unimodularly equivalent} if they are isomorphic via an affine lattice-preserving transformation. We denote by $\conv(A)$ the convex hull of a set $A \subseteq \R^n$. We say $P$ is an $n$-dimensional {\em unimodular simplex} if it is unimodularly equivalent to 
$\Delta_n := \conv(0,e_1,\ldots,e_n)$, where $0$ denotes the origin of $\R^n$ and $e_1, \ldots, e_n$ the standard basis vectors. We define the {\em normalized volume} $\Vol(P)$ as 
$\dim(P)!$ times the Euclidean volume with respect to the affine lattice given by the intersection of $\Z^n$ and the affine span of $P$. Note that $\Vol(\Delta_n)=1$.
 
\begin{definition}{\rm 
Let $P_1, \ldots, P_m \subset \R^n$ be a finite set of lattice polytopes.
\begin{itemize}
\item For $k \in \Z_{\ge 1}$ we set $[k] := \{1, \ldots, k\}$.
\item For $\emptyset \not= I \subseteq [m]$ we define their {\em Minkowski sum}
\[P_I := \sum_{i \in I} P_i := \left\{\sum_{i\in I} x_i \;:\; x_i \in P_i \; \text{ for } i \in I\right\}.\]
We set $P_\emptyset := \{0\}$.
\item For $\emptyset \not= A \subset \R^n$ we define
\[A_\Z := A \cap \Z^n,\quad\quad \intr_\Z(A) := \intr(A) \cap \Z^n,\]
where the interior always denotes the relative interior (i.e., the interior with respect to the affine span of $A$). Recall that the interior of a point is considered to be the point itself, i.e., it is non-empty. 
\item For convenience, we say $P_1, \ldots, P_m$ is {\em proper} in $\R^n$ if 
\[\dim(P_1) \ge 1,\quad \ldots,\quad \dim(P_m) \ge 1, \quad \dim(P_{[m]})=n.\]
One of the reasons for excluding points in a proper family is that adding a point to a family just results in a lattice translation of their Minkowski sum.
\end{itemize}
}
\end{definition}

Throughout the paper, we {\em identify} two families of lattice polytopes if they agree up to a simultaneous unimodular transformation of $\Z^n$, permutations of the factors, and (lattice) translations of the factors.

\smallskip
	Let us state our main definition.

\begin{definition}{\rm Let $P_1, \ldots, P_m \subset \R^n$ be a finite set of lattice polytopes. 
\begin{itemize}
\item We define the {\em mixed codegree} of $P_1, \ldots, P_m$ as follows:

\begin{itemize}
\item If there exists $\emptyset \not= I \subseteq [m]$ such that $\intr_\Z(P_I) \not= \emptyset$, then $\mcd(P_1, \ldots, P_m)$ is defined as the minimal cardinality of such $I$;
\item otherwise, $\mcd(P_1, \ldots, P_m) := m+1$.
\end{itemize}
Let us note that 
\[1 \le \mcd(P_1, \ldots, P_m) \le m+1.\]
\item We define the {\em mixed degree} of $P_1, \ldots, P_m$ as
\[\md(P_1, \ldots, P_m) := \dim(P_{[m]}) + 1  - \mcd(P_1, \ldots, P_m).\]
Note that a family where one of the lattice polytopes is a point automatically has 
mixed degree equal to the dimension of $P_{[m]}$. We remark that for a proper family
\begin{equation}
n-m \le \md(P_1, \ldots, P_m) \le n.
\label{trivial}
\end{equation}
\end{itemize}
}\label{md-def}
\end{definition}

\begin{example}{\rm Consider the following family in $\R^2$: 
\[P_1 = \conv(0,e_1),\quad P_2 = \conv(0,e_2),\quad P_3 = \conv(0,e_1,e_2,e_1+e_2),\]
where $e_1, e_2$ is the standard basis of $\R^2$. Then the Minkowski sums of any two of these three lattice polytopes are hollow, while the Minkowski sum of all three is not. Hence, $\mcd(P_1,P_2,P_3)=3$ and $\md(P_1,P_2,P_3)=0$.
}
\label{cool-example}
\end{example}

\subsection{Relation to the Ehrhart-theoretic degree} Let us explain where the definition of the mixed (co-)degree comes from. 
Let $P \subset \R^n$ be an $n$-dimensional lattice polytope. The {\em codegree} of $P$ is defined as the smallest positive $k$ such that $\iZ(k P) \not=\emptyset$, and the {\em degree} of $P$ is given as $n+1-\codeg(P)$. Hence, for $P_1 := P, \ldots, P_n := P$, we see\footnote{We warn the reader that $\mcd(P)$ is in general different from $\codeg(P)$, as well as $\mcd(P_1, \ldots, P_m)$ is in general different from $\codeg(P_1*\cdots*P_m)$ (see Definition~\ref{def-cayley}).} that 
\[\mcd(P_1, \ldots, P_n)=\codeg(P), \quad \md(P_1, \ldots, P_n)=\deg(P).\]
This {\em unmixed} situation has been studied rather intensively (e.g., \cite{BN07,Nil07,HNP09}) leading to applications and relations to the adjunction theory of polarized toric varieties \cite{SandraFirst,SandraAdjunction,Araujo}, dual defective toric varieties \cite{DN10}, and almost-neighborly point configurations \cite{NillPadrol}. We hope to eventually generalize some of the achieved results to the mixed situation.

Note that the degree of a lattice polytope $P$ is originally defined as the degree of the {\em $h^*$-polynomial} $h^*_P$, the numerator polynomial of the rational generating function of the Ehrhart polynomial of $P$ (e.g., \cite{BN07}). In this case, the relation between degree and codegree follows from Ehrhart-Macdonald reciprocity (see \cite[Remark 1.2]{BN07}). We remark that a priori there are several possibilities how to define a generalization of the degree to families of lattice polytopes. Here, we generalize the geometric notion of the codegree instead of the more algebraic definition of the degree. It would be very interesting to find an analogous natural interpretation for the mixed degree. Originally motivated by tropical geometry \cite{ST10}, there is current research to investigate a mixed version of the $h^*$-polynomial \cite{MixedPaper,RamanNew}, however, its properties are yet to be fully understood. We caution the reader that the degree of the mixed $h^*$-polynomial as defined in \cite{MixedPaper} is in general not equal to the mixed degree discussed here. For instance, for the one-element family $P_1 := P$ with $m=1$ the mixed degree equals $n$ or $n-1$ depending on whether $P$ has interior lattice points or not, while on the other hand equation \cite[(10)]{MixedPaper} implies that the degree of the mixed $h^*$-polynomial is in this case always equal to $n$ if $n$ is odd.

\label{sec:ehrhart}
\subsection{Motivation from algebraic geometry}

Given a proper family of lattice polytopes, it is natural to consider the following situation. We say $P_1, \ldots, P_m$ is {\em irreducible} if $\intr_{\Z}(P_I) = \emptyset$ for any $\emptyset \not= I \subsetneq [m]$ and $\intr_{\Z}(P_{[m]}) \not= \emptyset$. The study of irreducible families of given mixed degree turns up in the Batyrev-Borisov construction of mirror-symmetric Calabi-Yau complete intersections \cite{Bat94, BB94, BN08}. For this, let us call a proper family of lattice polytopes $P_1, \ldots, P_m$ a {\em reflexive} family if their Minkowski sum is a reflexive polytope (up to translation), e.g., a so-called nef-partition \cite{BB94}. In this case, let us choose generic Laurent polynomials $f_1, \ldots, f_m$ with Newton polytopes $P_1, \ldots, P_m$. Then the complete intersection $V$ of the closures of the hypersurfaces $\{f_i=0\} \subset (\C^*)^n$ in the toric Gorenstein Fano variety associated to $P_{[m]}$ is a {\em Calabi-Yau variety} of dimension $n-m$ (see \cite{Bat94,BB94}). Let us assume that the reflexive family is irreducible. In this case, Corollary~3.5 in \cite{BB94} implies that the dimension of $V$ equals the mixed degree minus one; 
$V$ is non-empty if and only if the mixed degree is at least one; and $V$ is an irreducible variety if and only if the mixed degree is at least two. By the so-called semi-simplicity principle for nef-partitions (Section~5 in \cite{BB94} and more generally Proposition~6.13 in \cite{BN08}), any reflexive family can be partitioned into irreducible reflexive subfamilies. Hence, in this toric setting the study of Calabi-Yau complete intersections of given dimension is closely related to the study of irreducible families of given mixed degree.

\subsection{Structure of the paper} Section~2 contains the main results of this paper. Proofs are give in Section~3.

\begin{Ack}{\rm The author would like to thank Alan Stapledon, Fr\'ed\'eric Bihan, Alexander Esterov, Alicia Dickenstein, Christian Haase, Raman Sanyal, and especially Ivan Soprunov for useful discussions and helpful remarks. The author is grateful to the MSRI and the Fields Institute for financial support. First ideas about a mixed degree go back to the stay of the author as a postdoctoral fellow at the MSRI as part of the program on Tropical Geometry. This paper was essentially finished during a stay at the Fields Institute as part of the program on Combinatorial Algebraic Geometry. The author is partially supported by the Vetenskapsr\aa det grant NT:2014-3991 (as an affiliated researcher with Stockholm University).
}
\end{Ack}

\section{Results on the mixed degree}

In this section we describe our results on the mixed degree of a family of lattice polytopes. We will postpone all proofs to the next section.

\subsection{Nonnegativity}
\label{sec:nonneg}

Here is our first observation.

\begin{proposition}
The mixed degree is nonnegative.
\label{nonneg}
\end{proposition}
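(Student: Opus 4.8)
The plan is to prove the equivalent inequality $\mcd(P_1,\ldots,P_m)\le \dim(P_{[m]})+1$; writing $d:=\dim(P_{[m]})$ and recalling $\md=d+1-\mcd$, nonnegativity is exactly this bound. If some $P_i$ is a single lattice point then $\intr_\Z(P_i)\neq\emptyset$, so $\mcd=1\le d+1$; hence I may assume $\dim(P_i)\ge 1$ for all $i$. For each $i$ let $D_i\subseteq\Z^n$ be the set of lattice edge-directions $b-a$ with $a,b\in (P_i)_\Z$. Since $P_i$ is a lattice polytope, $D_i$ spans the linear space $\mathrm{lin}(P_i)$ of directions of $P_i$, and $\dim(P_I)=\dim\big(\sum_{i\in I}\mathrm{lin}(P_i)\big)$ for every $I$.

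The argument then splits according to Rado's theorem, applied to the linear matroid on $\bigcup_i D_i$ with blocks $D_i$. Either (I) there is an independent transversal, i.e.\ vectors $u_i\in D_i$ ($i\in[m]$) that are linearly independent, or (II) there is none. In case (I) the $u_i$ force $d=\dim(P_{[m]})\ge m$, whence $\mcd\le m+1\le d+1$ and we are done. In case (II), Rado's theorem produces a subset with $\dim(P_I)<|I|$; choosing $I$ minimal with this property gives $\dim(P_I)=|I|-1$ while every proper $J\subsetneq I$ satisfies $\dim(P_J)\ge|J|$. It then suffices to show $\intr_\Z(P_I)\neq\emptyset$, for this yields $\mcd\le|I|=\dim(P_I)+1\le d+1$.

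To produce an interior lattice point of $P_I$ I would pass to a zonotope. The aim is to choose a \emph{circuit transversal}: vectors $u_i\in D_i$ ($i\in I$) that are minimally linearly dependent, so that $\dim\big(\sum_{i\in I}[0,u_i]\big)=|I|-1=\dim(P_I)$. Granting this, let $\sum_{i\in I}\lambda_i u_i=0$ be the (up to scale unique) circuit relation, with all $\lambda_i\neq 0$, and set $p:=\sum_{i:\lambda_i>0}u_i\in\Z^n$. Perturbing by the relation, $p=\sum_{i\in I}\big(1_{\{\lambda_i>0\}}-\varepsilon\lambda_i\big)\,u_i$, and for small $\varepsilon>0$ every coefficient lies in $(0,1)$; since $\relint\big(\sum_{i\in I}[0,u_i]\big)=\sum_{i\in I}\relint[0,u_i]$, the point $p$ lies in the relative interior of the zonotope $Z_I:=\sum_{i\in I}[0,u_i]$. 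Translating the chosen edges appropriately, a lattice translate of $Z_I$ is contained in $P_I$ with $\aff(Z_I)=\aff(P_I)$, so the relatively open set $\relint(Z_I)$ is contained in $\relint(P_I)$; hence the corresponding lattice translate of $p$ lies in $\intr_\Z(P_I)$, as desired.

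The main obstacle is the selection of a circuit transversal in case (II). The conditions $\dim(P_I)=|I|-1$ and $\dim(P_J)\ge|J|$ for proper $J$ say precisely that $I$ is a circuit of the polymatroid $J\mapsto\dim(P_J)$, and I expect to realize it by a genuine linear circuit via a genericity argument: choose each $u_i\in D_i$ so as to avoid the finitely many determinantal conditions that would force some $(|I|-1)$-subset to be dependent. Each such condition is a nontrivial multilinear form — for every $j\in I$ the block $I\setminus\{j\}$ admits a basis transversal by Rado, since removing one element of the minimal circuit preserves the dimension — and the spanning lattice-direction sets $D_i$ are dense enough to meet the complement of their common zero locus. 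Making this selection rigorous, in particular in the presence of higher-dimensional factors $P_i$ where $D_i$ is not a single direction, is the technical heart; the geometric input, namely the perturbation trick giving the interior point of a circuit zonotope, is then immediate.
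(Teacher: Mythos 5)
Your overall scheme (reduce to $\mcd(P_1,\ldots,P_m)\le\dim(P_{[m]})+1$, split by Rado's theorem, pass to a minimal $I$ with $\dim(P_I)=|I|-1$, and produce an interior lattice point of $P_I$ from a circuit zonotope via the perturbation $p=\sum_i(1_{\{\lambda_i>0\}}-\varepsilon\lambda_i)u_i$) is sound in all the steps you carry out, but the step you yourself flag as the technical heart --- the existence of a \emph{circuit transversal} $u_i\in D_i$, $i\in I$ --- is a genuine gap, and the genericity argument you sketch for it does not work. The standard spanning-set trick applies to a \emph{single} multilinear form: if a multilinear $f$ vanishes on $S_1\times\cdots\times S_k$ with each $S_i$ spanning, then $f\equiv 0$. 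Your condition, however, is a conjunction of $|I|$ rank conditions (one for each deleted index $j$), and avoiding their common zero locus is not a multilinear condition; spanning of the $D_i$ alone does not suffice. Concretely, at the level of block families of spanning sets your claim is false: take $D_1=D_2=D_3=\{e_1,e_2\}\subset\R^2$. Every two-block subfamily admits an independent transversal (exactly the deletion-Rado hypothesis you invoke), and the total rank is $2=|I|-1$, yet by pigeonhole every transversal contains two parallel vectors, so no transversal is a circuit. Thus any correct proof of your selection step must use the special structure of difference sets of lattice polytopes (e.g., three non-collinear lattice points in $P_i$ yield three pairwise non-parallel differences, which rules out $D_i\subseteq\{0,\pm e_1,\pm e_2\}$ for $2$-dimensional $P_i$); your proposal asserts but does not supply such an argument, so the proof is incomplete precisely at its crux.

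For comparison, the paper's proof bypasses all of this combinatorics. It reduces to a proper family $P_1,\ldots,P_{n+1}$ in $\R^n$, assumes $\iZ(P_I)=\emptyset$ for all $\emptyset\ne I\subseteq[n+1]$, and uses the Ehrhart--Macdonald reciprocity formula (Corollary~\ref{mv-innere}), which expresses $\MV$ as $1$ plus a signed sum of interior lattice point counts: under the assumption, every $n$-element subfamily has mixed volume $1$, and so does the merged family $P_1,\ldots,P_{n-1},P_n+P_{n+1}$, contradicting multilinearity via $1=1+1$. Note also that your strategy does not actually require $\iZ(P_I)\ne\emptyset$ for the \emph{full} circuit $I$: any $\emptyset\ne J\subseteq I$ with $\iZ(P_J)\ne\emptyset$ already gives $\mcd\le |J|\le \dim(P_I)+1$. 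After restricting to $\aff(P_I)$ your minimal circuit is exactly a family of $|I|$ polytopes in an $(|I|-1)$-dimensional lattice, which is the situation the paper's mixed-volume contradiction handles directly; substituting that argument for your unproven circuit-transversal selection would repair the proof, at the cost of importing the mixed-volume machinery your approach was trying to avoid.
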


Let us recall how one can convex-geometrically prove nonnegativity in the unmixed situation. Note that for an arbitrary interior point of an $n$-dimensional lattice polytope $P$ Carath\'eodory's theorem allows to find vertices $v_0, \ldots, v_n$ of $P$ such that the point is in the convex hull of these vertices. Therefore, also $(v_0 + \cdots + v_n)/(n+1)$ is in the interior of $P$. Hence, $\iZ((n+1)P) \not=\emptyset$, thus, $\codeg (P) \le n+1$, so $\deg(P) \ge 0$. Hence, Proposition~\ref{nonneg} may be seen as a mixed version of Carath\'eodory's theorem in the following sense: {\em Given  $P_1, \ldots, P_m$ lattice polytopes in $\R^n$ with $m > n$, there exists a non-empty subset $I \subseteq [m]$ of cardinality $|I| \le n+1$ such that the Minkowski sum $P_I$ contains a relative interior lattice point.}

\subsection{Mixed degree zero} In the unmixed case, $\deg(P)=0$ if and only if $\Vol(P)=1$ (see \cite{BN07}). As we will see, the analogous statement is also true in the mixed situation. This may be regarded as favorable evidence that the definition of the mixed degree is a reasonable generalization of the unmixed degree of a lattice polytope.

\label{sec:mixedzero}

For this, let us define the (normalized) {\em mixed volume} $\MV(P_1, \ldots, P_n)$ of a family $P_1, \ldots, P_n \subset \R^n$ 
as the coefficient of $\lambda_1 \cdots \lambda_n$ of the homogeneous polynomial $\vol_n(\lambda_1 P_1 + \cdots + \lambda_n P_n)$, where $\vol_n$ is the standard Euclidean volume of $\R^n$, see \cite{EG12,Sch93}. It is nonnegative, monotone with respect to inclusion, and multilinear. Note that the mixed volume defined here is normalized such that for an $n$-dimensional lattice polytope $P \subset \R^n$ we have $\MV(P, \ldots, P) = \Vol(P)$. Hence, the following result generalizes the unmixed statement. 

\begin{theorem}
Let $P_1, \ldots, P_n$ be a proper family of lattice polytopes in $\R^n$. Then the following conditions are equivalent:
\begin{enumerate}
\item $\md(P_1, \ldots, P_n)=0$
\item $\MV(P_1, \ldots, P_n)= 1$.
\end{enumerate}
\label{md0-low}
\end{theorem}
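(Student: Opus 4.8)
The plan is to reduce the statement to the Esterov--Gusev classification of mixed volume one, after an elementary reformulation of condition~(1). First I would unwind the definitions: since the family is proper, $\dim(P_{[n]})=n$, so $\md(P_1,\ldots,P_n)=n+1-\mcd(P_1,\ldots,P_n)$, and hence (1) is equivalent to $\mcd(P_1,\ldots,P_n)=n+1$. By the definition of the mixed codegree (where $m=n$, so $m+1=n+1$), this means that no nonempty subfamily has a relative interior lattice point in its Minkowski sum; that is, $P_I$ is hollow for every $\emptyset\neq I\subseteq[n]$. Thus the theorem is equivalent to the assertion that $\MV(P_1,\ldots,P_n)=1$ if and only if all subfamily Minkowski sums are hollow.

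Next I would prove the easy half of the numerical bound, namely that hollowness forces $\MV\geq 1$; this part is self-contained and does not use \cite{EG12}. Recall that the normalized mixed volume of lattice polytopes is a nonnegative integer, so it suffices to show $\MV(P_1,\ldots,P_n)>0$. By the standard positivity criterion for mixed volumes (see \cite{Sch93}) it is enough to verify that $\dim(P_I)\geq |I|$ for every $\emptyset\neq I\subseteq[n]$. If this failed for some $I$ with $\dim(P_I)=d<|I|$, then, up to translation, the $|I|>d$ lattice polytopes $(P_i)_{i\in I}$ would all lie in the rank-$d$ affine lattice spanned by $P_I$; applying the mixed Carath\'eodory statement that follows Proposition~\ref{nonneg} inside this $d$-dimensional lattice produces a nonempty $J\subseteq I$ with $\intr_\Z(P_J)\neq\emptyset$, contradicting hollowness. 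Hence $\MV\geq 1$ whenever the family is hollow.

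The heart of the argument is to upgrade this to the exact equivalence with $\MV=1$, and this is where I would invoke the main theorem of \cite{EG12}, which gives a complete list of normal forms for families of mixed volume one. Concretely, I would show that a family appears on the Esterov--Gusev list if and only if all of its subfamily Minkowski sums are hollow. For the direction $\MV=1\Rightarrow$ hollow one inspects each normal form and confirms directly that no subfamily sum acquires a relative interior lattice point. For the direction hollow $\Rightarrow\MV=1$ one combines the bound $\MV\geq 1$ from the previous step with the structure theorem: a hollow family cannot contain the sub-configurations that \cite{EG12} shows are present whenever $\MV\geq 2$, so the mixed volume must be exactly one.

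I expect the main obstacle to be precisely this matching between the combinatorial normal forms of \cite{EG12} and the hollowness condition, and in particular the implication hollow $\Rightarrow\MV\leq 1$: it requires reading off from the Esterov--Gusev structure theorem that any family with $\MV\geq 2$ must possess a subfamily whose Minkowski sum contains a relative interior lattice point. The reverse bookkeeping, that every listed normal form is hollow, should be a routine if somewhat lengthy case check.
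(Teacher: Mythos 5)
Your reformulation of (1) as hollowness of all subfamily sums is correct, and your derivation of $\MV\ge 1$ from hollowness (Bernstein's criterion plus the mixed Carath\'eodory consequence of Proposition~\ref{nonneg}, applied inside the affine lattice spanned by a degenerate $P_I$) is a valid, self-contained argument. But the logical load is distributed wrongly between the two directions, and the step you lean on hardest does not exist. For ``hollow $\Rightarrow \MV=1$'' you propose to cap the mixed volume at $1$ by claiming that \cite{EG12} exhibits, in every family with $\MV\ge 2$, a sub-configuration forcing an interior lattice point in some subsum. Esterov--Gusev prove no such statement: their theorem is a recursive characterization of the families with $\MV=1$ (after translations, $k$ of the polytopes are faces of a common unimodular $k$-simplex and the projections of the remaining $n-k$ along it again have mixed volume one), and it supplies no certificate carried by every family of mixed volume at least two, let alone one phrased in terms of interior lattice points. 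To use it for your implication you would have to show that every hollow family satisfies the recursive condition, which amounts to re-proving the classification. In fact this direction needs no classification at all: Corollary~\ref{mv-innere} gives
\[
\MV(P_1,\ldots,P_n) \;=\; 1+\sum_{\emptyset\ne I\subseteq [n]} (-1)^{\dim(P_I)-\card{I}}\,|\intr_\Z(P_I)|,
\]
an identity obtained from the inclusion-exclusion formula for the mixed volume together with Ehrhart--Macdonald reciprocity, so hollowness of all $P_I$ yields $\MV=1$ in one line (subsuming your $\MV\ge 1$ step as well).

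Conversely, the direction you dismiss as ``routine if somewhat lengthy case check,'' namely $\MV=1\Rightarrow$ hollow, is exactly where \cite{EG12} is needed and where the real work sits. Since the normal forms are recursive, ``inspecting'' them means an induction on the decomposition of Theorem~\ref{mv=1}, and the base case hides a genuine lemma: faces of a unimodular simplex can have non-hollow subsums (two translates of the same edge sum to a segment with a relative interior lattice point), so one must first invoke Bernstein's criterion (Lemma~\ref{mv0-lemma}) to extract $\dim(P_I)\ge |I|$ from $\MV\ne 0$, and then show that faces of a unimodular simplex satisfying this dimension condition have hollow subsums (Lemma~\ref{key} of the paper, using $P_I\subseteq |I|\,\Delta_n$ and the facial structure of dilated unimodular simplices). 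The inductive step further requires observing that the projection along the simplex sends $P_I$ to $\Pb_{I\cap\{k+1,\ldots,n\}}$ up to translation and carries relative interior lattice points to relative interior lattice points, so a non-hollow subsum upstairs would contradict the inductive hypothesis downstairs. In short: move your use of the classification from the implication hollow $\Rightarrow\MV=1$ (where it is both unsupported and unnecessary) to the implication $\MV=1\Rightarrow$ hollow, supply the dimension lemma there, and replace your treatment of the easy direction by the formula of Corollary~\ref{mv-innere}.
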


While the implication (1) $\Rightarrow$ (2) has a short proof, the reverse implication (2) $\Rightarrow$ (1) relies on the highly non-trivial classification of $n$ lattice polytopes of mixed volume one by Esterov and Gusev \cite{EG12}. It would be desirable to find a direct, classification-free proof. 

\medskip

In the unmixed case, there is only one $n$-dimensional lattice polytope of degree $0$, respectively normalized volume $1$, namely, the unimodular $n$-simplex. Such a uniqueness result also holds in the mixed case if all lattice polytopes in the family are full-dimensional. This was essentially first proven in \cite[Prop.~2.7]{CCDDRS11}. 

\begin{proposition}[Cattani et al. '13] Let $P_1, \ldots, P_m$ be $n$-dimensional lattice polytopes. Then $\md(P_1, \ldots, P_m) = 0$ if and only if $m \ge n$ and $P_1, \ldots, P_m$ equal the same unimodular $n$-simplex (up 
to translations).
\label{md0}
\end{proposition}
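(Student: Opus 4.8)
The plan is to prove the two directions separately: the forward (``if'') implication is a direct computation, while the reverse (``only if'') one rests on Theorem~\ref{md0-low} together with the full-dimensional mixed-volume-one classification. For the ``if'' direction, I would assume $m \ge n$ and that each $P_i$ is a translate of a fixed unimodular $n$-simplex, which after a unimodular transformation I may take to be $\Delta_n$. Then every Minkowski sum $P_I$ with $\emptyset \neq I \subseteq [m]$ is a lattice translate of $|I|\,\Delta_n$. Since $\intr_\Z(k\Delta_n)$ is nonempty exactly when $k \ge n+1$ (the interior lattice point of smallest coordinate sum being $(1,\ldots,1)$, whose sum is $n$), the smallest $|I|$ producing an interior lattice point is $n+1$ when $m \ge n+1$, while for $m = n$ no subset qualifies and $\mcd = m+1 = n+1$. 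Either way $\mcd(P_1,\ldots,P_m) = n+1$, so $\md(P_1,\ldots,P_m) = \dim(P_{[m]}) + 1 - (n+1) = 0$.

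For the ``only if'' direction I would first extract the combinatorial consequences of $\md = 0$. Since all factors are $n$-dimensional, $\dim(P_{[m]}) = n$, so $\md = 0$ forces $\mcd = n+1$; as $\mcd \le m+1$ always holds, this already gives $m \ge n$, and it also means $\intr_\Z(P_I) = \emptyset$ for every $I \subseteq [m]$ with $|I| \le n$. Next I would pass to $n$-element subfamilies: fixing $\{i_1,\ldots,i_n\} \subseteq [m]$, every nonempty subset of it has size $\le n$ and hence empty interior lattice set, so this subfamily has $\mcd = n+1$ and thus $\md = 0$. Being proper (each factor is full-dimensional and their sum is $n$-dimensional), Theorem~\ref{md0-low} yields $\MV(P_{i_1},\ldots,P_{i_n}) = 1$.

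The key lemma, and the step I expect to be the main obstacle, is that an $n$-element family of $n$-dimensional lattice polytopes of mixed volume one must consist of translates of a single unimodular $n$-simplex. I would invoke this from~\cite{CCDDRS11}, or else prove it by induction on $n$: the case $n = 1$ just says a length-one lattice segment is a translate of $\Delta_1$, and for the inductive step I would project along a primitive edge direction of one factor and use the monotonicity and facet recursion of the mixed volume to descend to an $(n-1)$-dimensional family of mixed volume one, with full-dimensionality being exactly what prevents the intrusion of higher-volume empty simplices. It is worth stressing that full-dimensionality is genuinely needed here: Example~\ref{cool-example} already shows that $\md = 0$ does not force a common simplex for merely proper families, and Theorem~\ref{md0-low} characterizes $\md = 0$ but says nothing about the shape of the polytopes.

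Finally I would globalize from $n$-subsets to the whole family. If $n = 1$ the conclusion is already contained in the previous step. If $n \ge 2$, let $S$ be the unimodular simplex of which $P_1,\ldots,P_n$ are translates; for each $j > n$ the subfamily indexed by $\{1,\ldots,n-1,j\}$ also has mixed volume one, so its members are translates of one unimodular simplex, and since it shares the nonempty block $\{1,\ldots,n-1\}$ with $\{1,\ldots,n\}$, that simplex is a translate of $S$, and hence so is $P_j$. Thus all of $P_1,\ldots,P_m$ are translates of $S$, which completes the argument.
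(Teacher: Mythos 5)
Your proof is correct and follows essentially the same route as the paper: the paper deduces $m \ge n$ from the trivial bound \eqref{trivial}, applies Proposition~\ref{md0-char} (whose chain $(1)\Rightarrow(3)\Rightarrow(4)$ is exactly your passage through mixed volume one via Corollary~\ref{mv-innere} and \cite[Prop.~2.7]{CCDDRS11}) to $n$-element subfamilies, and gets the converse from $\codeg(\Delta_n)=n+1$, just as you do. Your only additions are cosmetic: you route the step $\md=0\Rightarrow\MV=1$ through the easy direction of Theorem~\ref{md0-low} instead of Corollary~\ref{mv-innere} directly, and you spell out the overlapping-subfamilies gluing that the paper leaves implicit in ``any subfamily.''
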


In the low-dimensional case, the situation is more complicated. From the results of Esterov and Gustev \cite{EG12} we get an inductive description of families of $n$ lattice polytopes of mixed degree zero. For this, let us define for $\emptyset \not= I \subseteq [m]$ the {\em lattice projection $\pi_I$} along the affine span of $P_I$. More precisely, $\pi_I$ is the $\R$-linear map induced by the lattice surjection $\Z^n \to \Z^n/\Gamma$, where $\Gamma$ is the subgroup that is a translate of the set of lattice points in the affine hull of $P_I$.

\begin{cor}
Let $P_1, \ldots, P_n$ be proper. Then $\md(P_1, \ldots, P_n)=0$ if and only if one of the following two cases holds:
\begin{enumerate}
\item $P_1, \ldots, P_n$ are contained in the same unimodular $n$-simplex (up to translations),
\item there exists an integer $1 \le k  < n$ such that (up to translations and permutation of $P_1, \ldots, P_n$) $P_1, \ldots, P_k$ are contained in a $k$-dimensional subspace of $\R^n$ with 
$\dim(P_{[k]})=k$ such that $\md(P_1, \ldots, P_k) = 0$ and $\md(\pi_{[k]}(P_{k+1}), \ldots, \pi_{[k]}(P_n))=0$.
\end{enumerate}
\label{inductive-mdeg0}
\end{cor}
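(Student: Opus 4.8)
The plan is to leverage Theorem~\ref{md0-low} to translate the mixed-degree-zero condition into the statement $\MV(P_1,\ldots,P_n)=1$, and then apply the Esterov--Gusev classification \cite{EG12} of $n$-tuples of lattice polytopes of mixed volume one. The classification there is fundamentally recursive: it asserts that a mixed-volume-one family either sits inside a single unimodular simplex, or else admits a proper coordinate subspace to which some of the polytopes are confined while the remaining polytopes, after projecting out that subspace, again form a mixed-volume-one family in the quotient. My task is essentially to unwind their classification into the language of the mixed degree and verify that the two bullet cases in the corollary are precisely what that recursion produces.

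**First I would** establish the two ``easy'' directions. For the forward implication I assume $\md(P_1,\ldots,P_n)=0$, so by Theorem~\ref{md0-low} we have $\MV(P_1,\ldots,P_n)=1$, and I invoke the Esterov--Gusev dichotomy: either the whole family lies in a unimodular simplex (giving case (1)), or there is a decomposition as above. In the latter case, reindexing so that $P_1,\ldots,P_k$ are the polytopes captured by a $k$-dimensional rational subspace $L$ with $\dim(P_{[k]})=k$, I must check that the two resulting subfamilies each have mixed degree zero. This is where the multiplicativity of mixed volume under such splittings enters: the mixed volume of the full family factors (up to the appropriate lattice normalization) as the product $\MV(P_1,\ldots,P_k)$ computed in $L$ times $\MV(\pi_{[k]}(P_{k+1}),\ldots,\pi_{[k]}(P_n))$ computed in $\Z^n/(\Z^n\cap L)$. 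Since the total mixed volume is $1$ and both factors are positive integers, each must equal $1$, and Theorem~\ref{md0-low} applied in each smaller ambient lattice converts these back into $\md=0$ for the two subfamilies, yielding case (2).

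**For the converse**, I assume one of the two cases holds and must deduce $\md=0$, equivalently $\MV=1$. Case (1) is immediate since a tuple contained in a single unimodular simplex has mixed volume bounded by $\Vol(\Delta_n)=1$, hence exactly $1$ by properness. For case (2) I run the same factorization in reverse: the hypotheses $\md(P_1,\ldots,P_k)=0$ and $\md(\pi_{[k]}(P_{k+1}),\ldots,\pi_{[k]}(P_n))=0$ give, via Theorem~\ref{md0-low} in the respective lattices, that both mixed-volume factors equal $1$; multiplicativity then forces the total mixed volume to be $1$, hence $\md(P_1,\ldots,P_n)=0$. The one subtlety is verifying that the hypotheses of Theorem~\ref{md0-low}---namely that each subfamily is \emph{proper} in its ambient lattice---are met, so that the theorem applies; the condition $\dim(P_{[k]})=k$ guarantees properness of the first subfamily in $L$, and properness of the projected family follows because projecting a full-dimensional Minkowski sum along $L$ spans the $(n-k)$-dimensional quotient.

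**The main obstacle** I expect is not the logical scaffolding but the precise bookkeeping of lattice normalizations in the multiplicativity step. The clean factorization of normalized mixed volume across the exact sequence $0 \to \Z^n\cap L \to \Z^n \to \Z^n/(\Z^n\cap L) \to 0$ holds only because $L$ is spanned by the lattice (it is the affine hull of $P_{[k]}$ intersected with $\Z^n$), so the induced sublattice and quotient lattice are the correct normalizing lattices for the two smaller mixed volumes; one must confirm there is no index factor lurking. Once this normalization is pinned down, the argument is a direct transcription of the Esterov--Gusev recursion, and the real mathematical content has already been outsourced to \cite{EG12} and Theorem~\ref{md0-low}.
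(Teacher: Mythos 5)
Your overall route---translating $\md=0$ into $\MV=1$ via Theorem~\ref{md0-low}, invoking the Esterov--Gusev recursion of Theorem~\ref{mv=1}, and moving mixed volume one back and forth through the factorization of Lemma~\ref{mv-induction}---is exactly the derivation the paper intends (the paper gives no separate written proof of this corollary), and your handling of case (2) in both directions, including the positivity-of-integer-factors argument and the properness checks, is essentially sound. Your worry about lattice normalizations is also a non-issue: the subspace is rational, so the induced sublattice and quotient lattice are the correct normalizers in Lemma~\ref{mv-induction}. The genuine gap is in your converse for case (1), where you assert that a proper family contained in a unimodular $n$-simplex has mixed volume ``exactly $1$ by properness.'' Monotonicity does give $\MV\le 1$, but properness does \emph{not} give positivity: by Bernstein's criterion (Lemma~\ref{mv0-lemma}), $\MV(P_1,\ldots,P_n)\ge 1$ requires $\dim(P_I)\ge \card{I}$ for \emph{every} $\emptyset\ne I\subseteq[n]$, and this can fail for a proper family inside a simplex. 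Concretely, in $\R^3$ take $P_1=P_2=\conv(0,e_1)$ and $P_3=\conv(0,e_2,e_3)$: the family is proper and all three polytopes are faces of $\Delta_3$, yet $\dim(P_{\{1,2\}})=1<2$, so $\MV(P_1,P_2,P_3)=0$; moreover $e_1\in\intr_\Z(P_1+P_2)$ gives $\mcd=2$ and $\md=2\ne 0$.

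This example shows the failure is not merely in your write-up: case (1), read literally, is false without a nondegeneracy clause, which is present in Theorem~\ref{mv=1} as the hypothesis $\MV\ne 0$ but gets silently dropped in your argument. A correct proof of this direction must carry the condition $\dim(P_I)\ge\card{I}$ for all $\emptyset\ne I\subseteq[n]$ along with case (1); with it, positivity plus monotonicity gives $\MV=1$ and one concludes via Theorem~\ref{md0-low}, or one argues directly with Lemma~\ref{key} (faces of $\Delta_n$ satisfying the dimension condition have all Minkowski sums hollow, so $\mcd=n+1$ and $\md=0$). Since you explicitly flagged the normalization bookkeeping as the main risk---which is harmless---while the step you waved through with ``by properness'' is the one that actually fails, the proposal as written does not close.
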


For $m>n$ we do not yet have such a complete classification result of all families of $m$ lattice polytopes of mixed degree $0$. However, we can show that there are essentially only finitely many cases. 

\begin{theorem}
Let $P_1, \ldots, P_m$ be a proper family with $\md(P_1, \ldots, P_m)=0$ and $m > n$. Then one of the following two cases holds:
\begin{enumerate}
\item either $P_1, \ldots, P_m$ are contained in a unimodular $n$-simplex $Q$ (up to translations),
\item or the family $P_1, \ldots, P_m$ belongs to a finite number of exceptions (whose number depends only on $n$).
\end{enumerate}
Moreover, in the first case, at most $(2^n-1)(n-1)$ of the polytopes in the family are not equal to $Q$ (up to translations). More precisely, no face of $Q$ of dimension $j<n$ appears among $P_1, \ldots, P_m$ more than $j$ times (up to translations).
\label{mdeg0-guess}
\end{theorem}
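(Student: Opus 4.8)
The plan is to reduce everything to the classification of the $n$-element subfamilies and then treat the dichotomy and the quantitative bound separately. Since the family is proper, $\md(P_1,\ldots,P_m)=0$ means $\mcd(P_1,\ldots,P_m)=n+1$, and as $m>n$ we have $n+1<m+1$, so this value is genuinely attained: every subfamily $P_I$ with $|I|\le n$ is hollow, while some $(n+1)$-subfamily is not. Fixing any $J\subseteq[m]$ with $|J|=n$, all subsums $P_K$ with $K\subseteq J$ are hollow, so $\mcd(P_J)=n+1$ and $\md(P_J)=\dim(P_J)-n$; by Proposition~\ref{nonneg} this forces $\dim(P_J)=n$ and $\md(P_J)=0$. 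Hence Theorem~\ref{md0-low} shows that \emph{every} $n$-subfamily is proper with $\MV=1$, so each is governed by the Esterov--Gusev classification recorded in Corollary~\ref{inductive-mdeg0}; in particular the positivity criterion for mixed volumes gives $\dim(P_L)\ge|L|$ for all $L\subseteq[m]$ with $|L|\le n$.

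Next I would dispose of the quantitative part of case~(1) independently of the classification. If all $P_i$ lie in a common unimodular simplex $Q\cong\Delta_n$ (up to translation), then, since the only lattice points of $Q$ are its vertices, each lattice polytope contained in $Q$ is the convex hull of a subset of the vertices of $Q$, i.e.\ a face; properness rules out the $0$-dimensional faces, so every member that is not $Q$ is a face of some dimension $1\le j\le n-1$. The key point is that a fixed $j$-dimensional face $F$ occurs at most $j$ times: if it occurred $j+1$ times, the sum of these copies would be a translate of $(j+1)F\cong(j+1)\Delta_j$, which has an interior lattice point because $\codeg(\Delta_j)=j+1$; as $j+1\le n$ this contradicts $\mcd=n+1$. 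Summing over all faces bounds the number of non-$Q$ members by $\sum_{j=1}^{n-1} j\binom{n+1}{j+1}=(2^n-1)(n-1)$, the identity being a one-line binomial computation.

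It then remains to prove the dichotomy, for which I would follow the inductive scheme behind Corollary~\ref{inductive-mdeg0} and search for a \emph{tight set}: a $K$ with $1\le|K|<n$ and $\dim(P_K)=|K|$ (these are exactly the extremal sets for the inequality $\dim(P_L)\ge|L|$, and a tight singleton records a segment). If a tight $K$ exists, the family splits as in Corollary~\ref{inductive-mdeg0} into the $\md=0$ family $\{P_i\}_{i\in K}$ inside a $|K|$-dimensional subspace and the $\md=0$ family $\{\pi_K(P_i)\}_{i\notin K}$ of lattice projections in the quotient; since $m-|K|>n-|K|$, the second piece is again an instance of the present theorem in lower dimension, while the first is the case of $n$ factors. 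Running induction on $n$ on both pieces, together with Corollary~\ref{inductive-mdeg0}, either recombines them into a common unimodular simplex (case~(1)) or produces one of finitely many exceptions. If no proper tight $K$ exists the family is irreducible, and here the finiteness of irreducible mixed-volume-one tuples bounds both $m$ and each $P_i$ in terms of $n$, giving case~(2).

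The main obstacle is this third step, which genuinely rests on the depth of the Esterov--Gusev result rather than on the elementary arguments above, and I would isolate two points. First, the reduction lemma and its converse must be established for $m>n$ factors, since Corollary~\ref{inductive-mdeg0} is stated only for $m=n$: that a tight set splits the family into two $\md=0$ families in complementary quotients, and conversely that all-simplex branches recombine into a single unimodular $n$-simplex $Q$ having the lower-dimensional simplices as faces. Second, one needs the boundedness underlying case~(2): a family not contained in a common unimodular simplex cannot have arbitrarily many factors, so that for fixed $n$ only finitely many configurations occur. A convenient tool for both the recombination and the size control is the additive ``simplex-size'' functional $s(K)=\sum_k h_K(u_k)$ over the facet normals $u_k$ of $Q$, which satisfies $s(P_I)=\sum_{i\in I}s(P_i)$ and detects containment in a translate of $Q$ by $s(\cdot)\le 1$; reconciling this additivity with the hollowness of every $n$-subsum is what should pin down the remaining factors, and I expect this reconciliation, rather than the counting or the reduction bookkeeping, to be the crux of the proof.
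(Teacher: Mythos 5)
Your first two steps are sound and coincide with the paper: the reduction showing that every $n$-element subfamily is proper with mixed degree zero (hence $\MV=1$, hence $\dim(P_L)\ge |L|$), and the counting argument in case~(1) --- lattice subpolytopes of $Q$ are faces, a $j$-face repeated $j+1\le n$ times gives $(j+1)\Delta_j$ with an interior lattice point since $\codeg(\Delta_j)=j+1$, and $\sum_{j=1}^{n-1} j\binom{n+1}{j+1}=(2^n-1)(n-1)$ --- is exactly the paper's closing argument. The problem is your third step: the dichotomy is the substance of the theorem, and you leave precisely its two load-bearing claims (recombination of the tight-set branches for $m>n$ factors, and the boundedness underlying case~(2)) as acknowledged gaps. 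In particular, your appeal to ``finiteness of irreducible mixed-volume-one tuples'' is not a result you can invoke: mixed volume one is a condition on exactly $n$ polytopes, so it gives no control on the number $m$ of factors in an irreducible family, and the tight-set induction by itself does not bound $m$ either; moreover your proposed functional $s$ presupposes the very simplex $Q$ whose existence is at stake.

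The paper's actual mechanism is different and worth recording. First, finiteness: by nonnegativity \emph{every} (not merely some) $(n+1)$-element subfamily $I$ satisfies $\iZ(P_I)\ne\emptyset$, and the computation $\MV(P_1,\ldots,P_{n-1},P_n+P_{n+1})=2$ from the proof of Proposition~\ref{nonneg} combined with Corollary~\ref{mv-innere} forces the much stronger statement $|\iZ(P_I)|=1$; then the Lagarias--Ziegler theorem (finitely many lattice polytopes with exactly one interior lattice point in fixed dimension, up to unimodular equivalence) yields finitely many types of $(n+1)$-subfamilies, hence a bound $N=N(n)$ on the possible factors $P_i$ once $P_1,\ldots,P_{n+1}$ is fixed. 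Second, the dichotomy: if $m>n+1+(n-1)N$, the pigeonhole principle produces $n$ factors equal to a common $Q$ up to translation, which is forced to be a unimodular $n$-simplex since $\Vol(Q)=\MV(Q,\ldots,Q)=1$; applying Esterov--Gusev (Theorem~\ref{mv=1}) to the tuple $(Q,\ldots,Q,P_i)$ then shows every remaining $P_i$ lies in $Q$ up to translation (the case $k>1$ forces $S=Q$ and $k=n$, while $k=1$ makes $P_i$ a primitive segment which, after projecting the $Q$'s along it and invoking Proposition~\ref{md0}, must be an edge of $Q$ --- a contradiction either way). Families with $m\le n+1+(n-1)N$ account for the finitely many exceptions. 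So the gap in your proposal is genuine: without the one-interior-point observation plus Lagarias--Ziegler, and the pigeonhole-plus-$(Q,\ldots,Q,P_i)$ analysis, neither the finiteness in case~(2) nor the containment in case~(1) is obtained.
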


\begin{remark}{\rm We leave it as an exercise to the reader to show that for $n=2$ there is precisely one exception in Theorem~\ref{mdeg0-guess}, namely, the family given in Example~\ref{cool-example}. 
It would be interesting to know whether there are exceptional families in Theorem~\ref{mdeg0-guess} of length larger than $n+1$.
}
\end{remark}

\subsection{Mixed degree at most one}
\label{soprunov-sec}

The following lower bound theorem can be found in \cite{B++08} based upon \cite{Sop07}.

\begin{theorem}[Soprunov '07] Let $P_1, \ldots, P_n$ be $n$-dimensional lattice polytopes. Then
\[|\intr_\Z(P_{[n]})| \ge \MV(P_1, \ldots, P_n) - 1.\]
\label{lower-bound}
\end{theorem}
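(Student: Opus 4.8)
The plan is to pass to the torus and read both quantities off generic Laurent polynomials. First I would choose generic $f_1,\dots,f_n$ with Newton polytopes $P_1,\dots,P_n$; by the Bernstein--Kushnirenko theorem the system $f_1=\dots=f_n=0$ has exactly $\MV(P_1,\dots,P_n)$ solutions in $(\C^*)^n$, forming a reduced zero-dimensional scheme $Z$. On the other side, fixing a smooth projective toric variety $X$ whose fan refines the normal fan of $P:=P_{[n]}$, the polytope $P$ gives a nef and big divisor $L=D_1+\dots+D_n$ (with $D_i$ nef, corresponding to $P_i$), and a standard toric adjunction computation à la Khovanskii identifies the interior lattice points with sections of the adjoint bundle, $|\intr_\Z(P_{[n]})|=h^0(X,\mathcal O_X(K_X+L))$. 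The inequality thereby becomes a comparison of $h^0(K_X+L)$ with the length $\MV$ of $Z$.

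The mechanism producing interior points is the residue pairing. Since $Z=D_1\cap\dots\cap D_n$ is a generic complete intersection, adjunction yields $(K_X+L)|_Z\cong K_Z\cong\mathcal O_Z$, so restriction defines a linear map $r\colon H^0(X,K_X+L)\to H^0(Z,\mathcal O_Z)\cong\C^{\MV}$, whose target is the space of functions on the $\MV$ intersection points. Concretely, $r$ sends the monomial section indexed by $m\in\intr_\Z(P_{[n]})$ to the tuple of toric (Grothendieck) residues of $x^m\,\tfrac{dx_1}{x_1}\wedge\dots\wedge\tfrac{dx_n}{x_n}\big/(f_1\cdots f_n)$ at the points of $Z$. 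I would then invoke the Global Residue Theorem: for an interior monomial the total residue vanishes, so $\operatorname{im}(r)$ lies in the hyperplane $H\subset\C^{\MV}$ of trace-zero functions, of dimension $\MV-1$. The ``$-1$'' is precisely this single trace relation.

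The heart of the argument, and the step I expect to be the main obstacle, is the reverse inclusion: that $r$ surjects onto $H$, i.e. that the toric residue pairing between interior monomials and functions on $Z$ is nondegenerate modulo the trace. Granting this, $\dim\operatorname{im}(r)=\MV-1$, and since $|\intr_\Z(P_{[n]})|=h^0(K_X+L)\ge\dim\operatorname{im}(r)$ the bound follows. Establishing the nondegeneracy is exactly where the full force of toric residue theory (Gelfond--Khovanskii) enters; a purely cohomological shortcut seems unavailable, because the Koszul/Euler-characteristic identity $\MV-1=\sum_{\emptyset\neq J\subseteq[n]}(-1)^{n-|J|}\,|\intr_\Z(P_J)|$ merely reduces the claim to the nonnegativity of an alternating sum of lower-order interior-point counts. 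That sum is nonnegative for small $n$ by monotonicity of $I\mapsto|\intr_\Z(P_I)|$ under inclusion (which already settles $n=2,3$), but monotonicity alone fails in general, confirming that genuinely new input is needed: either the residue-theoretic nondegeneracy above, or, as an alternative I would keep in reserve, a direct construction of interior points via a fine mixed subdivision in the combinatorial spirit of Bihan's result.
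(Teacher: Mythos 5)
Your outline reproduces Soprunov's original route (Euler--Jacobi plus Bernstein--Kushnirenko) rather than the proof in this paper, which is purely combinatorial; but as written it has a genuine gap exactly at the step you flag, and the flag does not repair it. What the global residue theorem actually gives is only the containment $\operatorname{im}(r)\subseteq H$; by itself this yields $\dim\operatorname{im}(r)\le \MV(P_1,\ldots,P_n)-1$, an inequality in the \emph{wrong} direction, and says nothing about $h^0(X,K_X+L)=|\iZ(P_{[n]})|$ from below. The entire content of the theorem sits in the surjectivity of $r$ onto $H$, and ``the full force of toric residue theory (Gelfond--Khovanskii)'' is not a proof of it: nondegeneracy of the global residue pairing on the whole quotient algebra $H^0(\mathcal{O}_Z)\cong\C^{\MV}$ is standard, but it does not tell you that the subspace spanned by the \emph{interior} monomials has codimension exactly one in it --- that spanning statement is equivalent to the theorem being proved. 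So ``granting this'' grants the theorem.

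The gap is fillable, and in fact your one clearly incorrect assertion is that ``a purely cohomological shortcut seems unavailable.'' Twist the Koszul resolution of $\mathcal{O}_Z$ by $K_X+L$ on a smooth projective toric resolution (for generic $f_i$ the complex is exact): its terms are $K_X+\sum_{i\in J}D_i$ for $J\subseteq[n]$. Because every $P_i$ is full-dimensional, every partial sum $\sum_{i\in J}D_i$ with $J\neq\emptyset$ is big and nef, so Kawamata--Viehweg (or toric) vanishing kills all of its higher cohomology, and the only surviving obstruction in the standard cohomology chase is $H^n(X,K_X)\cong\C$ coming from the term $J=[n]$. Hence $\dim\operatorname{coker}(r)\le 1$, so $|\iZ(P_{[n]})| = h^0(K_X+L)\ge\dim\operatorname{im}(r)\ge \MV(P_1,\ldots,P_n)-1$, with no residue nondegeneracy needed at all; note this is precisely where full-dimensionality enters (cf.\ Example~\ref{two-dim-example}). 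By contrast, the paper avoids algebraic geometry entirely: via Corollary~\ref{mv-innere} and Lemma~\ref{f-lemma} it reduces the bound to the nonnegativity $g(J)\ge 0$ of exactly the alternating sums you noticed and set aside as needing ``genuinely new input,'' and that input is Bihan's combinatorial Theorem~\ref{genus-formula}. Either carry out the vanishing argument above or invoke Bihan's result explicitly; as submitted, the central step is assumed rather than proved.
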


The original proof of Theorem~\ref{lower-bound} involved the Euler-Jacobi Theorem and Bernstein's Theorem. In \cite[Problem 1]{B++08} Soprunov asked whether there is a purely combinatorial proof. We can affirmatively answer this question in Section~\ref{sopru-sec} by reducing it to a recent result by Bihan related to the nonnegativity of the so-called discrete mixed volume \cite{Bihan}.

\begin{example}{\rm Note that the full-dimensionality assumption in Soprunov's lower bound theorem cannot be removed. 
Consider in $\R^2$ a unimodular $2$-simplex $P_1$ and a line segment $P_2$ parallel to one of the edges of $P_1$. If $P_2$ contains $k$ lattice points, 
then $\MV(P_1,P_2)=k-1$, while $|\intr_\Z(P_1 + P_2)| = 0$.}
\label{two-dim-example}
\end{example}

In the unmixed case ($P_1 = \cdots = P_n = P$), Theorem~\ref{lower-bound} follows directly from Ehrhart theory, see \cite{B++08}. Moreover, Soprunov observes in his note that equality is attained if and only if $\deg(P) \le 1$. This observation led him to define in \cite{B++08} a family of $n$-dimensional lattice polytopes $P_1, \ldots, P_n$ as having mixed degree at most $1$ if equality in Theorem~\ref{lower-bound} is attained, and mixed degree $0$ if $P_{[n]}$ has no interior lattice points. As the following result shows, this is compatible with our definition.

\begin{proposition}
Let $P_1, \ldots, P_n$ be $n$-dimensional lattice polytopes. 

Then $|\intr_\Z(P_{[n]})| = \MV(P_1, \ldots, P_n) - 1$ if and only if $\md(P_1, \ldots, P_n) \le 1$.
\label{equality}
\end{proposition}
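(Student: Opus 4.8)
The plan is to reduce the entire equivalence to a single identity obtained by ``mixing'' Ehrhart--Macdonald reciprocity, and then to read off both directions. First I would invoke McMullen's theorem: since each $P_i$ is $n$-dimensional, the enumerator $L(\lambda_1,\dots,\lambda_n):=|(\lambda_1 P_1+\cdots+\lambda_n P_n)_\Z|$ agrees on $\Z_{\ge 0}^n$ with a polynomial of degree $n$ whose coefficient of $\lambda_1\cdots\lambda_n$ equals $\MV(P_1,\dots,P_n)$, so the $n$-fold mixed difference at the origin returns $\sum_{J\subseteq[n]}(-1)^{n-|J|}|(P_J)_\Z|=\MV(P_1,\dots,P_n)$. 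Applying multivariate Ehrhart--Macdonald reciprocity to the interior enumerator (each $P_J$ with $J\neq\emptyset$ is full-dimensional, hence $|\intr_\Z(P_J)|=(-1)^n L(-\mathbf{1}_J)$) and bookkeeping the point term $P_\emptyset=\{0\}$ yields, after the two $(-1)^n$ contributions cancel, the key identity
\[
|\intr_\Z(P_{[n]})|-\MV(P_1,\dots,P_n)+1=\sum_{\emptyset\neq J\subsetneq[n]}(-1)^{\,n-1-|J|}\,|\intr_\Z(P_J)|.
\]
The right-hand side is nonnegative — this is exactly Soprunov's bound (Theorem~\ref{lower-bound}) in the form derived from the nonnegativity of Bihan's discrete mixed volume \cite{Bihan} — so the identity simultaneously re-proves Theorem~\ref{lower-bound} and isolates the equality defect.

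The implication $\md(P_1,\dots,P_n)\le 1\Rightarrow$ equality is then immediate. By Definition~\ref{md-def} and $\dim(P_{[n]})=n$, the condition $\md\le 1$ is equivalent to $\mcd\ge n$, i.e.\ to $\intr_\Z(P_J)=\emptyset$ for every $\emptyset\neq J\subsetneq[n]$; hence every summand on the right vanishes and the defect is $0$. No case distinction is needed, but one checks consistency: in the subcase $\md=0$ one has $|\intr_\Z(P_{[n]})|=0$ and, by Theorem~\ref{md0-low}, $\MV=1$, matching $0=1-1$.

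For the converse I would argue by contraposition: assuming $\md\ge 2$, i.e.\ $\mcd=:k\le n-1$, I must show the right-hand side is strictly positive. After translating each factor so that $0\in P_i$, the function $J\mapsto|\intr_\Z(P_J)|$ is monotone nondecreasing on nonempty subsets and interior points spread, $\intr_\Z(P_J)+(P_{J'})_\Z\subseteq\intr_\Z(P_{J\cup J'})$; moreover $|\intr_\Z(P_J)|=0$ for $|J|<k$ while $|\intr_\Z(P_{J_0})|>0$ for some $|J_0|=k$. The concrete target is the sumset estimate $|\intr_\Z(P_{J_0})+(P_{[n]\setminus J_0})_\Z|\ge\MV(P_1,\dots,P_n)$: since the left-hand set lies in $\intr_\Z(P_{[n]})$, this forces $|\intr_\Z(P_{[n]})|\ge\MV>\MV-1$. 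For $n\le 3$ this estimate follows directly from the spreading inclusion together with the monotonicity above, and it explains why $n=2$ is trivial (the defect $\sum_i|\intr_\Z(P_i)|$ has no alternation at all).

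The main obstacle is that for $n\ge 4$ the alternating defect genuinely mixes signs across the levels $k\le|J|\le n-1$, so I must rule out cancellation. I expect to control this by applying the nonnegativity of the discrete mixed volume \cite{Bihan} not only to the whole family but to every subfamily: via the same reciprocity this gives nonnegativity of the analogous interior-point alternating sums over every subinterval $[\emptyset,J]$, and feeding these into an induction on $n$ (organized around the minimal non-hollow level $k$, whose contributions all carry the sign $(-1)^{n-1-k}$ and are bounded below by the spreading estimate) should show that the lowest non-vanishing level dominates and the defect is strictly positive. Establishing this sign-control — equivalently, characterizing precisely when Bihan's discrete mixed volume vanishes — is the crux of the argument.
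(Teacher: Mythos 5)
Your first two steps are correct and coincide with the paper's: the key identity you derive via McMullen plus multivariate reciprocity is exactly Corollary~\ref{mv-innere} rearranged, the defect you isolate is the paper's $\tilde g([n])$, and the direction $\md(P_1,\ldots,P_n)\le 1\Rightarrow$ equality (all proper $P_J$ hollow, so the defect vanishes termwise) is the paper's argument verbatim. The genuine gap is the converse, and you have named it yourself: nothing in your outline rules out cancellation in the alternating defect. Your proposed route fails concretely at two points. First, the sumset estimate $|\intr_\Z(P_{J_0})+(P_{[n]\setminus J_0})_\Z|\ge \MV(P_1,\ldots,P_n)$ is an unproved strengthening of Soprunov's bound that you only assert; second, the ``induction organized around the minimal non-hollow level'' is a plan, not a proof, and you concede the crux (sign control for $n\ge 4$) is open. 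Also, citing the nonnegativity of the right-hand side as ``exactly Soprunov's bound'' is circular in spirit: nonnegativity of the total defect is equivalent to Theorem~\ref{lower-bound}, but it gives you nothing about which configurations achieve equality.

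The missing idea is a M\"obius-inversion decomposition that makes the sign problem disappear entirely. Set $g(J):=\sum_{\emptyset\ne K\subseteq J}(-1)^{\card{J}-\card{K}}\,|\intr_\Z(P_K)|$. Since every subfamily $(P_j)_{j\in J}$ still consists of $n$-dimensional lattice polytopes, the Khovanskii--Bihan nonnegativity (Theorem~\ref{genus-formula}) applies to \emph{each} nonempty $J\subseteq[n]$, not just to $J=[n]$, giving $g(J)\ge 0$ throughout. M\"obius inversion (Lemma~\ref{f-lemma}) then rewrites your defect as $\tilde g([n])=\sum_{\emptyset\ne J\subsetneq[n]} g(J)$, a sum of nonnegative terms; so equality in Theorem~\ref{lower-bound} holds iff $g(J)=0$ for every proper nonempty $J$, and inverting once more, via $|\intr_\Z(P_I)|=\sum_{\emptyset\ne J\subseteq I}g(J)$, this is equivalent to $\intr_\Z(P_I)=\emptyset$ for all $\emptyset\ne I\subsetneq[n]$, i.e.\ to $\mcd(P_1,\ldots,P_n)\ge n$, i.e.\ to $\md(P_1,\ldots,P_n)\le 1$. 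In particular you never need to characterize when Bihan's discrete mixed volume vanishes --- only its nonnegativity applied to all subfamilies --- and that single observation replaces both your sumset estimate and the proposed sign-control induction.
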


\section{Proofs}
\label{proofs-sec}

\subsection{Nonnegativity}

This will be a simple consequence of basic properties of the mixed volume. For this, let us recall a well-known alternative formula (see e.g. \cite{KS08}).

\begin{proposition}
Let $P_1, \ldots, P_n$ be lattice polytopes in $\R^n$. Then
\[\MV(P_1, \ldots, P_n) = \sum_{I \subseteq [n]} (-1)^{n-\card{I}} \;|P_I \cap \Z^n|.\]
\label{mv-all}
\end{proposition}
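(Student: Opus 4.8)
The plan is to deduce the identity from the multivariate analogue of Ehrhart's theorem together with a standard finite-difference extraction of a polynomial coefficient. First I would introduce the mixed lattice-point counting function
\[E(\lambda_1, \ldots, \lambda_n) := \big|(\lambda_1 P_1 + \cdots + \lambda_n P_n) \cap \Z^n\big|, \qquad \lambda \in \Z_{\ge 0}^n,\]
and invoke the classical fact (see e.g. \cite{KS08}) that $E$ agrees on $\Z_{\ge 0}^n$ with a polynomial of total degree at most $n$ whose degree-$n$ homogeneous part is exactly $\vol_n(\lambda_1 P_1 + \cdots + \lambda_n P_n)$. Since the squarefree monomial $\lambda_1 \cdots \lambda_n$ has degree $n$, its coefficient in $E$ comes solely from this top part and hence, by the very definition of the mixed volume, equals $\MV(P_1, \ldots, P_n)$. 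The identity to be proved is therefore the assertion that this particular coefficient is computed by the alternating sum on the right-hand side.

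Next I would extract the coefficient of $\lambda_1 \cdots \lambda_n$ by iterated first differences. Writing $D_i f(\lambda) := f(\lambda + e_i) - f(\lambda)$ (a symbol chosen to avoid a clash with the simplex $\Delta_n$), a telescoping expansion gives
\[(D_1 \cdots D_n E)(\mathbf{0}) = \sum_{I \subseteq [n]} (-1)^{n - |I|}\, E(\mathbf{1}_I),\]
where $\mathbf{1}_I \in \{0,1\}^n$ is the indicator vector of $I$. On the other hand, each $D_i$ annihilates any monomial of degree $0$ in $\lambda_i$ and lowers the $\lambda_i$-degree of the remaining monomials by one; hence $D_1 \cdots D_n$ kills every monomial not divisible by $\lambda_1 \cdots \lambda_n$, and since $\deg E \le n$ the only surviving monomial is $\lambda_1 \cdots \lambda_n$ itself, on which the operator returns the constant $1$. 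Thus $(D_1 \cdots D_n E)(\mathbf{0})$ equals the coefficient of $\lambda_1 \cdots \lambda_n$ in $E$, namely $\MV(P_1, \ldots, P_n)$. Finally, evaluating the counting function gives $E(\mathbf{1}_I) = |P_I \cap \Z^n|$ for every $I$, where for $I = \emptyset$ the convention $P_\emptyset = \{0\}$ yields $E(\mathbf{0}) = 1 = |P_\emptyset \cap \Z^n|$, so the two expressions match term by term and the claimed formula follows.

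The routine combinatorial bookkeeping — the telescoping identity and the degree count for $D_1 \cdots D_n$ — is elementary, so the genuine content and the only real obstacle is the polynomiality of $E$ together with the identification of its leading homogeneous part with the volume. I would treat this as an input rather than reprove it, citing the multivariate Ehrhart theorem (McMullen's theory of translation-invariant valuations; see also \cite{KS08}). It is worth recording explicitly that the argument is compatible with the chosen normalization of $\MV$ precisely because that normalization is defined so that the coefficient of $\lambda_1 \cdots \lambda_n$ in $\vol_n(\lambda_1 P_1 + \cdots + \lambda_n P_n)$ equals $\MV(P_1, \ldots, P_n)$, which is exactly the quantity that the finite-difference computation isolates.
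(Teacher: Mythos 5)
Your proof is correct, and it fills in exactly the argument the paper leaves implicit: the paper states Proposition~\ref{mv-all} without proof, citing it as a well-known formula from \cite{KS08}, and your derivation via the multivariate Ehrhart polynomial plus the finite-difference extraction of the $\lambda_1\cdots\lambda_n$ coefficient is the standard proof behind that citation. All steps check out, including the degree bound $\deg E \le n$ (which also handles the degenerate case $\dim(P_{[n]})<n$, where both sides vanish) and the convention $E(\mathbf{0})=|P_\emptyset \cap \Z^n|=1$ matching the paper's $P_\emptyset=\{0\}$.
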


Recall that $|P_\emptyset \cap \Z^n|=1$. 
Using reciprocity one gets another slightly less well-known formula involving interior lattice points.

\begin{cor}
Let $P_1, \ldots, P_n$ be lattice polytopes in $\R^n$. Then
\[\MV(P_1, \ldots, P_n) = 1+ \sum_{\emptyset \not= I \subseteq [n]} (-1)^{\dim(P_I)-\card{I}} \; |\intr_\Z(P_I)|.\]
In particular, if $P_1, \ldots, P_n$ are $n$-dimensional, then
\[\MV(P_1, \ldots, P_n) = 1 + \sum_{\emptyset \not= I \subseteq [n]} (-1)^{n-\card{I}} \;|\intr_\Z(P_I)|.\]
\label{mv-innere}
\end{cor}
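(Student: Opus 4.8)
The plan is to derive the formula directly from Proposition~\ref{mv-all} via Ehrhart--Macdonald reciprocity, the only genuinely new ingredient being a change in the point at which a certain constant difference is evaluated. First I would invoke McMullen's theorem that the lattice-point enumerator
\[
L(\lambda_1,\ldots,\lambda_n) := \Big|\Big(\sum_{i=1}^n \lambda_i P_i\Big)\cap\Z^n\Big|
\]
agrees, on $\Z_{>0}^n$, with a polynomial of total degree $\dim(P_{[n]})$, together with its restriction property: setting $\lambda_j=0$ for $j\notin J$ turns $L$ into the enumerator $L_J$ of the subfamily $(P_i)_{i\in J}$, so that $L(\mathbf 1_J)=|P_J\cap\Z^n|$ and $L(\mathbf 0)=1$. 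I would also record multivariate Macdonald reciprocity in the form $L_J(-\mathbf 1) = (-1)^{\dim(P_J)}\,|\intr_\Z(P_J)|$ for each $\emptyset\neq J\subseteq[n]$, where $\mathbf 1$ denotes the all-ones vector.

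The key observation is that the iterated forward difference $\Delta_1\cdots\Delta_n L$, where $\Delta_i f(\lambda)=f(\lambda+e_i)-f(\lambda)$, is a constant: every degree-$n$ monomial of $L$ other than $\lambda_1\cdots\lambda_n$ omits some variable and is annihilated by the corresponding $\Delta_i$, and the lower-degree monomials are annihilated as well. Evaluating this constant at the origin and using the restriction property gives $\Delta_1\cdots\Delta_n L(\mathbf 0)=\sum_{I\subseteq[n]}(-1)^{n-|I|}|P_I\cap\Z^n|$, which is exactly Proposition~\ref{mv-all}; hence the constant equals $\MV(P_1,\ldots,P_n)$. Being constant, it may instead be evaluated at $-\mathbf 1=(-1,\ldots,-1)$, and expanding the difference operator there yields
\[
\MV(P_1,\ldots,P_n) = \sum_{I\subseteq[n]}(-1)^{n-|I|}\,L(-\mathbf 1 + \mathbf 1_I) = \sum_{J\subseteq[n]}(-1)^{|J|}\,L(-\mathbf 1_J),
\]
after substituting $J=[n]\setminus I$ and noting that $-\mathbf 1+\mathbf 1_I=-\mathbf 1_{[n]\setminus I}$.

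It then remains to evaluate each term. The summand for $J=\emptyset$ is $L(\mathbf 0)=1$. For $\emptyset\neq J$, the restriction property gives $L(-\mathbf 1_J)=L_J(-\mathbf 1)$, and reciprocity rewrites this as $(-1)^{\dim(P_J)}|\intr_\Z(P_J)|$; the combined sign is $(-1)^{|J|}(-1)^{\dim(P_J)}=(-1)^{\dim(P_J)-|J|}$, exactly the sign appearing in the statement. This proves the first formula, and the second follows at once, since $\dim(P_J)=n$ for every nonempty $J$ as soon as all the $P_i$ are full-dimensional.

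The main obstacle is not the sign bookkeeping, which is routine, but the two structural facts about the multivariate enumerator $L$: its polynomiality and, more delicately, the restriction property that identifies its value at a point with vanishing coordinates with the enumerator of the corresponding subfamily. It is precisely this restriction that forces the reciprocity sign to involve $\dim(P_J)$ rather than the ambient $\dim(P_{[n]})$, and hence produces the dimension-dependent sign in the statement. Once these inputs are secured, the entire proof reduces to the single move of shifting the evaluation point of the constant difference $\Delta_1\cdots\Delta_n L$ from $\mathbf 0$ to $-\mathbf 1$.
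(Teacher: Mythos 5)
Your proof is correct, but it reaches the final identity $\MV(P_1,\ldots,P_n)=\sum_{J\subseteq[n]}(-1)^{\card{J}}\,\ehr_{P_J}(-1)$ by a genuinely different mechanism than the paper. The paper stays entirely univariate: it applies Proposition~\ref{mv-all} to the dilated family $tP_1,\ldots,tP_n$, observes that $t^n\MV(P_1,\ldots,P_n)=(-1)^n+\sum_{\emptyset\neq I\subseteq[n]}(-1)^{n-\card{I}}\,\ehr_{P_I}(t)$ holds for all $t\in\Z_{\ge1}$ and hence as an identity of polynomials in the single variable $t$, then sets $t=-1$ and applies ordinary Ehrhart--Macdonald reciprocity to each sum $P_I$; the only inputs are Proposition~\ref{mv-all} and univariate Ehrhart theory of the polytopes $P_I$. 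You instead invoke McMullen's multivariate polynomiality of $L(\lambda_1,\ldots,\lambda_n)$, identify $\MV$ as the constant iterated difference $\Delta_1\cdots\Delta_n L$ (anchored via Proposition~\ref{mv-all} at the origin), and shift the evaluation point to $-\mathbf{1}$. That is heavier machinery for the same conclusion, but it buys a cleaner conceptual picture: the mixed volume appears intrinsically as the top finite difference (equivalently, the $\lambda_1\cdots\lambda_n$-coefficient) of the multivariate enumerator, Proposition~\ref{mv-all} is used only once rather than for every dilate, and the degenerate case $\dim(P_{[n]})<n$ is absorbed automatically since the constant difference then vanishes, consistently with $\MV=0$. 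Two points deserve more care than you give them: you state polynomiality of $L$ only on $\Z_{>0}^n$, yet your restriction step evaluates at points with vanishing coordinates, so you must quote McMullen's theorem in its standard form, polynomiality on all of $\Z_{\ge0}^n$ --- with agreement only on $\Z_{>0}^n$ the identification of the restricted polynomial with $L_J$ would not follow; and the multivariate reciprocity $L_J(-\mathbf{1})=(-1)^{\dim(P_J)}\card{\iZ(P_J)}$ should not be treated as a black box but derived (easily) by restricting to the diagonal, where $L_J(t\mathbf{1})=\ehr_{P_J}(t)$ for $t\in\Z_{\ge1}$ forces equality of the univariate polynomials, and then applying ordinary reciprocity at $t=-1$ --- which is, in effect, exactly the paper's argument resurfacing inside yours.
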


\begin{proof}
Let us denote by $\ehr_P(t)$ the Ehrhart polynomial of a lattice polytope $P \subset \R^n$, i.e., 
$\ehr_P(t) = |(t P) \cap \Z^n|$ for $t \in \Z_{\ge 1}$. Ehrhart-Macdonald reciprocity yields 
\[\ehr_P(-1)=(-1)^{\dim(P)} |\iZ(P)|.\]
(For the case of dimension $0$, recall that the interior of a lattice point is the lattice point itself.) 
Applying Proposition~\ref{mv-all} to $t P_1, \ldots, t P_n$ for $t \in \Z_{\ge 1}$ gives
\[t^n \MV(P_1, \ldots, P_n) = \MV( t P_1, \ldots, t P_n) = (-1)^n + \sum_{\emptyset \not= I \subseteq [n]} (-1)^{n-\card{I}} \; \ehr_{P_I}(t).\]
Plugging in $t=-1$ and Ehrhart-Macdonald reciprocity yields
\[(-1)^n \MV(P_1, \ldots, P_n) = (-1)^n + \sum_{\emptyset\not= I \subseteq [n]} (-1)^{n-\card{I}} \; (-1)^{\dim(P_I)} |\intr_\Z(P_I)|.\]
\end{proof}

\begin{proof}[Proof of Proposition~\ref{nonneg}]

It follows from the definition of the mixed degree that it suffices to show nonnegativity for a proper family $P_1, \ldots, P_{n+1}$ in $\R^n$. We assume that $\iZ(P_I)=\emptyset$ for any $\emptyset \not=I \subseteq [n+1]$. Consider the proper family $P_1, \ldots, P_{n-1},P_n+P_{n+1}$. Corollary~\ref{mv-innere} and multilinearity of the mixed volume yield 
\[1 = \MV(P_1, \ldots, P_{n-1},P_n+P_{n+1}) = \MV(P_1, \ldots, P_{n-1},P_n) + \MV(P_1, \ldots, P_{n-1},P_{n+1}).\]
However, Corollary~\ref{mv-innere} also implies that both of these summands equal $1$, a contradiction.
\end{proof}

\subsection{Mixed degree $0$ -- the full-dimensional case}

Since it might be of independent interest, we provide several characterizations of this situation. For this, let us recall the following definition.

\begin{definition}{\rm
The {\em Cayley polytope} of lattice polytopes $P_1, \ldots, P_m$ in $\R^n$ is defined as
\[P_1 * \cdots * P_m := \conv(P_1 \times \{e_1\}, \ldots, P_m \times \{e_m\}) \subset \R^{n+m}\]
where $e_1, \ldots, e_m$ is the standard basis of of $\R^m$. Note that if $\dim(P_{[m]})=n$, then \[\dim(P_1 * \cdots * P_m)=n+m-1.\]
\label{def-cayley}}
\end{definition}

\begin{proposition} Let $P_1, \ldots, P_n$ be $n$-dimensional lattice polytopes in $\R^n$. Then the following conditions are equivalent:
\begin{enumerate}
\item $\md(P_1, \ldots, P_n) =0$
\item $\iZ(P_1 + \cdots + P_n) = \emptyset$
\item $\MV(P_1, \ldots, P_n)=1$
\item $P_1, \ldots, P_n$ are lattice translates of the same unimodular $n$-simplex
\item $\Vol(P_1 + \cdots + P_n) = n^n$, which is the minimal possible value
\item $\Vol(P_1 * \cdots * P_n) = \binom{2n-1}{n}$, which is the minimal possible value
\item $\deg(P_1 * \cdots * P_n) = n-1$, which is the minimal possible value
\end{enumerate}
We remark that otherwise $\deg(P_1 * \cdots * P_n) =n$.
\label{md0-char}
\end{proposition}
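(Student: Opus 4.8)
The plan is to treat the seven conditions in two groups: the combinatorial conditions (1)--(4), whose equivalence I will largely import from the cited classification, and the volumetric conditions (5)--(7), which I will deduce from (4) together with one elementary lower bound. Throughout I use that for $n$-dimensional lattice polytopes $Q_1, \ldots, Q_n \subset \R^n$ the normalized mixed volume $\MV(Q_1, \ldots, Q_n)$ is a positive integer, hence $\MV(Q_1, \ldots, Q_n) \ge 1$: positivity holds because every $Q_i$ is full-dimensional, and integrality because $\MV$ is normalized.

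First I would settle (1) $\Leftrightarrow$ (2). The implication (1) $\Rightarrow$ (2) is the special case $I = [n]$ of the definition of $\md = 0$. For (2) $\Rightarrow$ (1) I use the inclusion $\intr(P_I) + P_{[n] \setminus I} \subseteq \intr(P_{[n]})$, valid for Minkowski sums of full-dimensional convex bodies: if some $P_I$ with $\emptyset \neq I \subsetneq [n]$ had an interior lattice point $p$, then adding a lattice vertex $v$ of $P_{[n]\setminus I}$ would give $p + v \in \intr_\Z(P_{[n]})$, contradicting (2), while the case $I = [n]$ is (2) itself. Thus (2) forces $\intr_\Z(P_I) = \emptyset$ for all nonempty $I$, i.e. $\md = 0$. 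The equivalence of (1), (3), (4) I then take from Theorem~\ref{md0-low} and Proposition~\ref{md0} (for $m=n$): (4) $\Rightarrow$ (3) is immediate since $\MV(\Delta_n, \ldots, \Delta_n) = \Vol(\Delta_n) = 1$, while the converse (3) $\Rightarrow$ (4) is exactly the classification. This classification is the deepest fact behind the whole statement, but it is imported rather than reproved here.

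Next I would derive (5) and (6) from the mixed-volume lower bound. Expanding by multilinearity gives $\Vol(P_{[n]}) = \sum_{(i_1, \ldots, i_n) \in [n]^n} \MV(P_{i_1}, \ldots, P_{i_n})$, a sum of $n^n$ terms each $\ge 1$; hence $\Vol(P_{[n]}) \ge n^n$, with equality forcing every term to equal $1$, in particular the term $\MV(P_1, \ldots, P_n)$ indexed by $(1,2,\ldots,n)$. This yields the minimality in (5) and the equivalence (5) $\Leftrightarrow$ (3). For (6) I would first record the Cayley volume formula: fibering $\Cay(P_1, \ldots, P_n)$ over $\Delta_{n-1}$, the fiber over $\lambda$ projects to $\sum_i \lambda_i P_i$, and integrating its volume over the base (a Dirichlet integral) shows that $\Vol(\Cay(P_1, \ldots, P_n))$ equals the sum, over all weak compositions $(c_1, \ldots, c_n)$ of $n$, of the mixed volumes taken with $P_i$ repeated $c_i$ times. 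There are $\binom{2n-1}{n}$ such terms, each $\ge 1$, so $\Vol(\Cay) \ge \binom{2n-1}{n}$, and equality forces the all-ones term $\MV(P_1, \ldots, P_n) = 1$. Conversely (4) gives $\Cay(\Delta_n, \ldots, \Delta_n) = \Delta_n \times \Delta_{n-1}$, of normalized volume $\binom{2n-1}{n}$, so (4) $\Rightarrow$ (6). The technical core here is the fiber-integration formula and checking its lattice normalization.

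Finally I would handle (7) and the closing remark through the codegree of the Cayley polytope, using $\deg = (2n-1) + 1 - \codeg$. The key step is the slice description of the relative interior: a lattice point lies in $\intr(k\,\Cay(P_1, \ldots, P_n))$ iff its last $n$ coordinates form a vector $\mu \in \Z_{\ge 1}^n$ with $\mu_1 + \cdots + \mu_n = k$ and its first $n$ coordinates lie in $\intr_\Z(\mu_1 P_1 + \cdots + \mu_n P_n)$. Since all $n$ weights must be $\ge 1$, this gives $\codeg(\Cay) \ge n$, hence $\deg(\Cay) \le n$; moreover $\codeg(\Cay) = n$ exactly when $\mu = (1, \ldots, 1)$ works, i.e. when $\intr_\Z(P_{[n]}) \neq \emptyset$, the negation of (2). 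If instead (2) holds, then by (2) $\Leftrightarrow$ (4) every $P_i$ is the unimodular simplex $\Delta_n$, so $\sum_i \mu_i P_i = (\sum_i \mu_i)\Delta_n$, which acquires an interior lattice point precisely when $\sum_i \mu_i \ge n+1$; the minimal admissible total is $n+1$, giving $\codeg(\Cay) = n+1$ and $\deg(\Cay) = n-1$. This proves (7) $\Leftrightarrow$ (2), that $n-1$ is the minimal value, and the remark that otherwise $\deg(\Cay) = n$. The only delicate point in this last part is justifying the slice description of $\intr(k\,\Cay)$, which follows from the product structure of the Cayley fibration over the interior of its base simplex.
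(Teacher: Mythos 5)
Your proposal is correct and follows essentially the same route as the paper: (1) $\Leftrightarrow$ (2) by full-dimensionality, (3) $\Leftrightarrow$ (4) imported from the classification of mixed volume one, (5) and (6) via volume expansions into mixed volumes in which every summand is a positive integer, and (7) via the Cayley trick. Two caveats, one logical and one a small omission. First, you invoke Proposition~\ref{md0} for (3) $\Rightarrow$ (4); within the paper that proposition is itself \emph{deduced from} Proposition~\ref{md0-char}, so quoting it here would be circular. The fix is trivial: cite the external source \cite[Prop.~2.7]{CCDDRS11} directly, exactly as the paper does. (Your appeal to Theorem~\ref{md0-low} for (1) $\Leftrightarrow$ (3) is logically safe, since its proof is independent of Proposition~\ref{md0-char}, but it imports the full Esterov--Gusev machinery where the paper gets (1) $\Rightarrow$ (3) from the elementary inclusion-exclusion identity of Corollary~\ref{mv-innere}.) Second, your $n^n$-term expansion argument only establishes (5) $\Rightarrow$ (3); the asserted ``equivalence (5) $\Leftrightarrow$ (3)'' needs the converse, which you should close by the one-line verification (4) $\Rightarrow$ (5), namely $\Vol(n\Delta_n)=n^n$ --- the analogue of what you do explicitly for (6), and precisely the paper's remark that one can ``directly verify (4) $\Rightarrow$ (5)'' in place of its Alexandrov--Fenchel argument. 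Beyond that, your fiber-integration derivation of the Cayley volume formula (where the paper cites \cite{DK86}) and your slice description of $\intr(k(P_1*\cdots*P_n))$, from which you compute $\codeg = n+1$ directly (where the paper uses the projection onto $\Delta_{n-1}$ and the known value $\deg(\Delta_n\times\Delta_{n-1})=n-1$), are correct and slightly more self-contained variants of the same steps; both the slice description and the claim $\deg = (2n-1)+1-\codeg$ are justified since $\dim(P_1*\cdots*P_n)=2n-1$ and the fibers over the relative interior of the base simplex are full-dimensional.
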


\begin{proof}

(1) $\lra$ (2) follows from full-dimensionality. (1) $\Rightarrow$ (3) by Corollary~\ref{mv-innere}. (3) $\Rightarrow$ (4) was proven in \cite[Prop.~2.7]{CCDDRS11}. Clearly, (4) $\Rightarrow$ (1).

(5) $\ra$ (3) follows from an expression in terms of multinomial coefficients (e.g., \cite{Sch93}):
\[\Vol(P_1+\cdots+P_n) = \sum_{k_1 + \cdots + k_n=n} \binom{n}{k_1, \cdots, k_n} \cdot \MV(P_1^{(k_1)}, \ldots, P_n^{(k_n)}),\]
where the sum is over all nonnegative integer $n$-tuples $k_1, \ldots, k_n$ satisfying the condition $k_1 + \cdots + k_n=n$; moreover, $P_i^{(k_i)}$ means that $P_i$ should be repeated $k_i$ times. 
Since all lattice polytopes are full-dimensional, each of the mixed volumes in the sum is positive. Let us note that, if they are all equal to $1$, the right side equals $n^n$. For (3) $\ra$ (5) note that if $\MV(P_1, \ldots, P_n)=1$, then also each of the mixed volumes (since they are all positive) must be equal to $1$ by the Alexandrov-Fenchel inequality (e.g., \cite{Sch93}). Alternatively, one can directly verify (4) $\ra$ (5). 

(6) $\ra$ (4) uses the following formula (e.g., \cite{DK86}): $\Vol(P_1 * \cdots * P_n)$ equals the sum of $\MV(P_{i_1}, \ldots, P_{i_n})$ over all possible choices of unordered $n$-tuples $i_1, \ldots, i_n \in [n]$, where repetitions are allowed (there are $\binom{2n-1}{n}$ such choices). Since all lattice polytopes are full-dimensional, each of the mixed volumes in the sum is positive. The converse (3) $\ra$ (6) follows as above from the Alexandrov-Fenchel inequality or directly by checking $(4) \ra (6)$.

(2) $\lra$ (7) is a consequence of the so-called Cayley-Trick. Consider the lattice projection $\pi$ mapping $\,P_1 *  \cdots * P_n$ onto $\Delta_{n-1}$. Therefore, $\codeg(P_1 * \ldots * P_n) \ge 
\codeg(\Delta_{n-1})=n$. Now, the intersection of $n (P_1 *  \cdots * P_n)$ 
with the preimage of the unique interior lattice point in $n \Delta_{n-1}$ is unimodularly equivalent to $P_1 + \cdots + P_n$.  Therefore, we have $\codeg(P_1 *  \cdots * P_n) > n$ (or equivalently, $\deg(P_1 * \cdots * P_n) < n$) precisely when (2) is satisfied. Let us note that in this case, by (4), $P_1 *  \cdots * P_n \cong \Delta_n \times \Delta_{n-1}$, so its degree equals $n-1$.
\end{proof}

\begin{proof}[Proof of Proposition~\ref{md0}]

The implication follows from \eqref{trivial} and by applying Proposition~\ref{md0-char} to any subfamily of $n$ lattice polytopes. The reverse implication is a direct consequence of $\codeg(\Delta_n)=n+1$. 

\end{proof}

\subsection{Mixed degree $0$ -- the low-dimensional case}

We used before that the mixed volume of full-dimensional polytopes is positive. Bernstein's criterion (e.g., \cite{Sch93}) gives the precise generalization.

\begin{lemma}
$\MV(P_1, \ldots, P_n) \ge 1$ if and only if $\dim(P_I) \ge |I|$ for all $\emptyset \not= I \subseteq [n]$.
\label{mv0-lemma}
\end{lemma}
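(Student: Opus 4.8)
The plan is to prove Bernstein's criterion (Lemma~\ref{mv0-lemma}) by establishing both implications, using the multilinearity and monotonicity of the mixed volume together with the combinatorial structure encoded in the dimension condition $\dim(P_I) \ge |I|$.

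\medskip

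I would begin with the forward direction, assuming $\MV(P_1, \ldots, P_n) \ge 1$ and deriving $\dim(P_I) \ge |I|$ for every $\emptyset \not= I \subseteq [n]$. The key observation here is monotonicity combined with a projection argument: suppose for contradiction that $\dim(P_I) < |I|$ for some subset $I$ with $k := |I|$. Let $L$ be the linear span of $P_I$ (after translating so that $0 \in P_I$), so $\dim(L) = \dim(P_I) < k$. Then the polytopes $P_i$ for $i \in I$ all lie in a subspace of dimension strictly less than $k$. Intuitively, the mixed volume $\MV(P_1, \ldots, P_n)$ should then vanish because the Minkowski-sum expansion $\vol_n(\lambda_1 P_1 + \cdots + \lambda_n P_n)$ cannot produce a genuinely $n$-dimensional body: the $k$ directions contributed by $\{P_i : i \in I\}$ are confined to a space of dimension less than $k$, so the full-dimensional volume degenerates. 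To make this rigorous I would invoke the standard fact that $\MV(P_1, \ldots, P_n) = 0$ whenever there is a subset $I$ of size $k$ for which $\dim(P_I) < k$; this is precisely the degeneracy criterion for mixed volumes (see \cite{Sch93}). This yields the desired contrapositive.

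\medskip

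For the reverse direction, assume $\dim(P_I) \ge |I|$ for all $\emptyset \not= I \subseteq [n]$; I want to conclude $\MV(P_1, \ldots, P_n) \ge 1$. I expect this to be the main obstacle, since the dimension condition is exactly a Hall-type (marriage) condition on the family, and the natural strategy is to use it to extract a transversal of lattice directions. Concretely, the condition $\dim(P_I) \ge |I|$ for all $I$ is the hypothesis of Hall's marriage theorem applied to the bipartite incidence between the polytopes $P_1, \ldots, P_n$ and a basis of directions spanned by their edge vectors; it guarantees a system of distinct representatives, i.e. one can choose lattice vectors $v_i$, one from (a translate of) each $P_i$ lying in its affine span, such that $v_1, \ldots, v_n$ are linearly independent. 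Then the unimodular-simplex-like or parallelepiped estimate gives $\MV(P_1, \ldots, P_n) \ge \MV(S_1, \ldots, S_n)$ by monotonicity, where $S_i \subseteq P_i$ is a segment in direction $v_i$ (of lattice length at least one); and the mixed volume of $n$ independent lattice segments equals the absolute value of the determinant $|\det(v_1, \ldots, v_n)| \ge 1$. The careful point, which I would treat as the crux, is justifying that the dimension condition on all Minkowski sums $P_I$ really does supply a transversal of independent lattice edge directions — this is where the matroid/Hall argument does the work, and one must check that $\dim(P_I)$ counts the rank of the union of the relevant direction sets so that Hall's condition applies verbatim. Granting that, monotonicity of $\MV$ under replacing each $P_i$ by the chosen segment, together with the integrality of the determinant, completes the proof.
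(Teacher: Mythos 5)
Your proposal cannot be compared against an internal argument, because the paper does not prove Lemma~\ref{mv0-lemma} at all: it is quoted as Bernstein's criterion with a pointer to \cite{Sch93}. What you have written is, in effect, the standard textbook proof (positivity of the mixed volume is equivalent to the existence of segments $S_i \subseteq P_i$ with linearly independent directions), and in outline it is correct. Two remarks on the framing: with the paper's normalization, the mixed volume of lattice polytopes is a nonnegative integer, so $\MV \ge 1$ is the same as $\MV > 0$, which is the form in which the criterion appears in \cite{Sch93}; and your concluding estimate $\MV(S_1,\ldots,S_n) = |\det(w_1,\ldots,w_n)| \ge 1$ for linearly independent lattice edge vectors $w_i$, combined with monotonicity, is exactly where integrality of the lattice enters and is correct as stated.

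Two steps need repair to make the sketch a proof. First, plain bipartite Hall ``against a fixed basis of directions'' does not work: whether a transversal of edge vectors is linearly independent is a matroid condition, not an incidence condition relative to a chosen basis (consider two segments with directions $e_1$ and $e_1+e_2$). The correct tool is Rado's theorem, the matroid generalization of Hall's theorem, applied to the sets $A_i$ of edge vectors of $P_i$ inside the linear matroid of $\R^n$: an independent transversal exists if and only if $\mathrm{rank}\bigl(\bigcup_{i\in I} A_i\bigr) \ge |I|$ for all $\emptyset \not= I \subseteq [n]$, and this rank equals $\dim(P_I)$ because the direction space of a polytope is spanned by its edge vectors and $\lin(P_I - P_I) = \sum_{i \in I} \lin(P_i - P_i)$ --- precisely the identification you flagged as the crux, which indeed must be verified. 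Second, your forward direction as written simply cites the degeneracy criterion from \cite{Sch93}, which is the contrapositive of half the lemma itself; it can be made self-contained by an elementary growth bound: if $P_I$ lies in a translate of a $d$-dimensional subspace with $d < k := |I|$, substitute $\lambda_i = \lambda$ for $i \in I$ and $\lambda_j = \mu$ for $j \notin I$ in the volume polynomial. Then $\vol_n(\lambda P_I + \mu P_{[n]\setminus I}) = O(\lambda^d)$ as $\lambda \to \infty$ for fixed $\mu > 0$, while the coefficient of $\lambda^k \mu^{n-k}$ is a sum of nonnegative mixed-volume terms that includes $\MV(P_1,\ldots,P_n)$; hence that coefficient, and with it $\MV(P_1,\ldots,P_n)$, vanishes. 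With these two fixes your argument is complete and matches the classical proof underlying the citation.
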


Note that in this case $P_1, \ldots, P_n$ is necessarily a proper family. Let us also recall the following well-known fact about mixed volumes (e.g., \cite{EG12,Sch93}).

\begin{lemma} Let $P_1, \ldots, P_n$ be lattice polytopes in $\R^n$. If $P_1, \ldots, P_k$ (for $1 \le k\le n$) are contained in a $k$-dimensional rational subspace $L$ of $\R^n$, then 
\[\MV(P_1, \ldots, P_n) = \MV(P_1, \ldots, P_k) \cdot \MV(\Pb_{k+1}, \ldots, \Pb_n),\]
where $\Pb_i$ is the image of $P_i$ under the projection along $L$.
\label{mv-induction}
\end{lemma}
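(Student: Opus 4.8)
The plan is to prove the identity at the level of the defining volume polynomial and then extract the coefficient of $\lambda_1\cdots\lambda_n$. Write $Q_\lambda := \sum_{i=1}^n \lambda_i P_i$ and split it as $Q_\lambda = A_\lambda + B_\lambda$, where $A_\lambda := \sum_{i=1}^k \lambda_i P_i \subset L$ and $B_\lambda := \sum_{i=k+1}^n \lambda_i P_i$. Let $p \colon \R^n \to \R^n/L$ denote the projection along $L$, so that $\Pb_i = p(P_i)$. Since $L$ is rational, the sublattice $\Lambda := L \cap \Z^n$ is saturated in $\Z^n$ of rank $k$, and I would fix a basis $f_1,\ldots,f_n$ of $\Z^n$ with $f_1,\ldots,f_k$ a basis of $\Lambda$, so that the images of $f_{k+1},\ldots,f_n$ form a basis of the quotient lattice. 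The first geometric step is the fiber identity: for $y \in \R^n/L$ one has $p(Q_\lambda)=p(B_\lambda)=\sum_{i=k+1}^n \lambda_i \Pb_i$, and the fiber decomposes as $Q_\lambda \cap p^{-1}(y) = A_\lambda + \big(B_\lambda \cap p^{-1}(y)\big)$, a Minkowski sum inside the $k$-dimensional affine fiber $p^{-1}(y)$, which is a translate of $L$. This holds because $p(a)=0$ for $a \in A_\lambda$, so the $L$-component of a point of $Q_\lambda$ lying over $y$ is free to range over all of $A_\lambda$.

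Second, I would apply Fubini in the adapted coordinates, which gives
\[\vol_n(Q_\lambda) = \int_{p(B_\lambda)} \vol_k\!\big(A_\lambda + (B_\lambda \cap p^{-1}(y))\big)\, dy.\]
Now I extract the coefficient of $\lambda_1\cdots\lambda_n$. For fixed $y$ and fixed $\lambda_{k+1},\ldots,\lambda_n$, the integrand is a polynomial in $\lambda_1,\ldots,\lambda_k$ of degree $k$, and its part of maximal degree $k$ in these variables is exactly $\vol_k(A_\lambda)$, since the fiber $B_\lambda \cap p^{-1}(y)$ carries none of the variables $\lambda_1,\ldots,\lambda_k$. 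Hence the coefficient of the degree-$k$ monomial $\lambda_1\cdots\lambda_k$ in the integrand equals its coefficient in $\vol_k(A_\lambda)$, namely the constant $\MV(P_1,\ldots,P_k)$ computed in $L$, independent of $y$. Because the domain of integration $p(B_\lambda)$ depends only on $\lambda_{k+1},\ldots,\lambda_n$, extracting the $\lambda_1\cdots\lambda_k$-coefficient commutes with the integral, giving
\[[\lambda_1\cdots\lambda_k]\,\vol_n(Q_\lambda) = \MV(P_1,\ldots,P_k)\cdot \vol_{n-k}\!\Big(\textstyle\sum_{i=k+1}^n \lambda_i \Pb_i\Big).\]
Extracting the remaining coefficient $[\lambda_{k+1}\cdots\lambda_n]$ of the right-hand factor yields $\MV(\Pb_{k+1},\ldots,\Pb_n)$, and the identity follows.

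The main obstacle is the compatibility of the three volume normalizations, which is exactly where the rationality of $L$ enters: in the adapted basis $f_1,\ldots,f_n$ the unit-covolume form of $\Z^n$ factors as the product of the unit-covolume form of $\Lambda$ on $L$ and that of the quotient lattice on $\R^n/L$, the covolumes multiplying precisely because $\Lambda$ is saturated. This is what makes the Fubini integral produce the \emph{normalized} mixed volumes in $L$ and in $\R^n/L$ with no stray lattice index, so that both factors match the normalization fixed in the definition of $\MV$. A secondary, routine point is to justify that differentiating $\partial_{\lambda_1}\cdots\partial_{\lambda_k}$ under the integral sign is legitimate and produces no boundary contribution; this holds since the integration domain is independent of $\lambda_1,\ldots,\lambda_k$ and the integrand is piecewise polynomial in $\lambda$. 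Finally, if $\dim(P_{[k]})<k$ the argument still applies verbatim: both $\MV(P_1,\ldots,P_k)$ and, by Bernstein's criterion (Lemma~\ref{mv0-lemma}), $\MV(P_1,\ldots,P_n)$ vanish, so the identity holds trivially.
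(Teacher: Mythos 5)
Your proof is correct, and it is worth noting that the paper does not actually prove this lemma at all: it records it as a well-known fact with a pointer to the literature (Esterov--Gusev and Schneider). So there is no step-by-step proof to match; what you have written is essentially the standard argument behind that citation, made self-contained. The key points all check out: the fiber identity $Q_\lambda \cap p^{-1}(y) = A_\lambda + \bigl(B_\lambda \cap p^{-1}(y)\bigr)$ holds because every decomposition $q=a+b$ with $a \in A_\lambda \subset L$ has $p(b)=p(q)$; Fubini in a basis of $\Z^n$ extending a basis of the saturated sublattice $\Lambda = L \cap \Z^n$ is exactly where rationality of $L$ enters, and it makes the three lattice normalizations multiply correctly (this is the one point where a sloppy argument would pick up a stray index, and you handle it properly); and extracting $[\lambda_1\cdots\lambda_k]$ under the integral is legitimate since neither the domain $p(B_\lambda)=\sum_{i>k}\lambda_i\Pb_i$ nor the fibers $B_\lambda\cap p^{-1}(y)$ depend on $\lambda_1,\dots,\lambda_k$, and the top-degree part of the integrand in these variables is $\vol_k(A_\lambda)$. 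Two small remarks. First, your appeal to Bernstein's criterion (Lemma~\ref{mv0-lemma}) in the degenerate case $\dim(P_{[k]})<k$ is unnecessary: your own coefficient computation already gives $[\lambda_1\cdots\lambda_k]\,\vol_n(Q_\lambda)=\MV(P_1,\dots,P_k)\cdot\vol_{n-k}\bigl(\sum_{i>k}\lambda_i\Pb_i\bigr)=0$ in that case, so both sides vanish without any external input; moreover, deducing $\MV=0$ from Lemma~\ref{mv0-lemma} tacitly uses that the normalized mixed volume of lattice polytopes is a nonnegative integer (true, by Proposition~\ref{mv-all}, but an extra ingredient). Second, for $k=n$ the quotient is a point and the identity requires the convention that the empty mixed volume equals $1$ (i.e.\ $\vol_0$ of a point is $1$), which your Fubini formula delivers automatically; and if you wish to avoid justifying differentiation under the integral sign, extract the coefficient by iterated finite differences in $\lambda_1,\dots,\lambda_k$, which commutes with the integral for trivial reasons.
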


Here is the (inductive) characterization of families with mixed volume one.

\begin{theorem}[Esterov, Gusev '12]
Let $P_1, \ldots, P_n$ be lattice polytopes in $\R^n$. 

Then $\MV(P_1, \ldots, P_n)=1$ if and only if $\MV(P_1, \ldots, P_n)\not=0$, and there exists an integer $1 \le k \le n$ such that, up to translations, 
$k$ of the polytopes are faces of the same unimodular $k$-simplex $Q$, and the projection of the other $n-k$ simplices along $Q$ form a family of mixed volume one.
\label{mv=1}
\end{theorem}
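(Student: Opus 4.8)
The forward implication is the substantial one; the converse is a short monotonicity argument, so let me begin there. Suppose the stated structural description holds and $\MV(P_1,\dots,P_n)\neq 0$. After reindexing, let $P_1,\dots,P_k$ be (translates of) faces of a common unimodular $k$-simplex $Q$. Since $\MV\neq 0$, Lemma~\ref{mv0-lemma} gives $\dim(P_{[k]})\ge k$; as $P_{[k]}$ lies in a translate of $kQ$ we also have $\dim(P_{[k]})\le k$, so $\aff(Q)$ is a translate of a rational $k$-subspace $L$ spanned by the first $k$ factors. Translating each $P_i$ ($i\le k$) into $Q\subset L$ and applying Lemma~\ref{mv-induction},
\[
\MV(P_1,\dots,P_n)=\MV(P_1,\dots,P_k)\cdot\MV(\Pb_{k+1},\dots,\Pb_n),
\]
where $\Pb_i$ denotes the image of $P_i$ under the projection along $L$. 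The second factor is $1$ by hypothesis. For the first, monotonicity of the mixed volume together with $P_i\subseteq Q$ gives $\MV(P_1,\dots,P_k)\le\MV(Q,\dots,Q)=\Vol(Q)=1$, while Lemma~\ref{mv0-lemma} gives $\MV(P_1,\dots,P_k)\ge 1$; hence both factors, and the product, equal $1$.

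For the forward implication I would argue by induction on $n$. Assume $\MV(P_1,\dots,P_n)=1$, so that by Lemma~\ref{mv0-lemma} we have $\dim(P_J)\ge\card{J}$ for every $\emptyset\neq J\subseteq[n]$. Call $J$ \emph{tight} if equality holds. Since $[n]$ is tight, I may choose an inclusion-minimal tight set $I$, write $k:=\card{I}$, and reindex so that $I=[k]$. After a translation $P_1,\dots,P_k$ lie in a rational $k$-subspace $L$, and Lemma~\ref{mv-induction} yields
\[
1=\MV(P_1,\dots,P_n)=\MV(P_1,\dots,P_k)\cdot\MV(\Pb_{k+1},\dots,\Pb_n).
\]
Both factors are nonnegative integers (Proposition~\ref{mv-all}), and in fact at least $1$: the hypotheses of Lemma~\ref{mv0-lemma} hold for the subfamily directly, and for the projected family because $\dim(\Pb_J)=\dim(P_{I\cup J})-k\ge\card{J}$ for $J\subseteq\{k+1,\dots,n\}$. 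A product of positive integers equal to $1$ forces $\MV(P_1,\dots,P_k)=\MV(\Pb_{k+1},\dots,\Pb_n)=1$. By minimality of $I$ the subfamily $P_1,\dots,P_k$ is \emph{irreducible}, meaning $\dim(P_J)>\card{J}$ for all $\emptyset\neq J\subsetneq[k]$, while the projected family lives in $\R^n/L\cong\R^{n-k}$ with mixed volume one and so carries the asserted inductive structure by the induction hypothesis. Thus the theorem follows provided the irreducible subfamily can be handled, i.e.\ provided I establish the following Key Lemma.

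\emph{Key Lemma.} If $P_1,\dots,P_k\subset\R^k$ is irreducible and $\MV(P_1,\dots,P_k)=1$, then, up to translation, $P_1,\dots,P_k$ are faces of one common unimodular $k$-simplex (the case $k=n$ closes the induction, the remaining family being empty). This is the genuinely hard step, and is where the classification of \cite{EG12} enters. One cannot hope to deduce it from the full-dimensional uniqueness statement Proposition~\ref{md0}: irreducibility does not force the $P_i$ to be full-dimensional, and indeed the three facets $\conv(0,e_1,e_2)$, $\conv(0,e_1,e_3)$, $\conv(0,e_2,e_3)$ of $\Delta_3$ form an irreducible family of mixed volume one consisting of pairwise distinct \emph{proper} faces — precisely the phenomenon that forces the word ``faces'' into the statement.

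To attack the Key Lemma I would run a secondary induction on $k$, built around the polytope $R:=\conv\bigl(\bigcup_i P_i\bigr)$. Monotonicity gives $1=\MV(P_1,\dots,P_k)\le\Vol(R)$, so the target is to prove equality: once $\Vol(R)=1$, the classical characterization of normalized volume one (a lattice polytope of normalized volume one is a unimodular simplex) identifies $R$ with a unimodular $k$-simplex $Q$, after which one shows that each $P_i\subseteq Q$ must itself be a face of $Q$. The plan for the equality is to exploit that $\MV=1$ is the minimum permitted by Lemma~\ref{mv0-lemma}, so the dimension inequalities $\dim(P_J)\ge\card{J}$ are saturated on a rich collection of subsets, and to feed this, together with the single linear relation among interior-point counts supplied by Corollary~\ref{mv-innere}, into the monotonicity bound so as to rule out any factor contributing volume beyond that of a simplex. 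Controlling exactly when all of these inequalities become equalities — in effect a face-sensitive, mixed refinement of the Carath\'eodory argument underlying Proposition~\ref{nonneg} — is the step I expect to be the main obstacle; it is the combinatorial core resolved by Esterov and Gusev, and a classification-free treatment of it would be very desirable.
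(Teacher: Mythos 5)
Be aware of what the paper actually does with this theorem: it is stated as a citation, and the paper's entire ``proof'' is the remark that the \emph{if}-direction follows from Lemmas~\ref{mv0-lemma} and \ref{mv-induction} together with monotonicity, while the \emph{only if}-direction is taken wholesale from \cite{EG12} as a ``highly non-trivial result.'' Measured against that, your \emph{if}-direction is correct and is precisely the paper's intended argument, carried out in full: Bernstein's criterion gives $\MV(P_1,\ldots,P_k)\ge 1$ (the dimension condition is intrinsic to the subspace $L$, so applying Lemma~\ref{mv0-lemma} inside $L$ is legitimate), monotonicity gives the reverse bound via $P_i\subseteq Q$, and Lemma~\ref{mv-induction} splits the product. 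Your use of the hypothesis $\MV\neq 0$ is also genuinely needed here (without it, e.g.\ $P_1=P_2=\conv(0,e_1)$ in $\R^2$ satisfies the structural description with $k=2$ but has mixed volume $0$), and you deploy it correctly. Your reduction of the forward direction to the irreducible case is sound and goes beyond what the paper writes down: the choice of an inclusion-minimal tight set, the verification that $\dim(\Pb_J)=\dim(P_{I\cup J})-\card{I}\ge\card{J}$ so that both factors in the product formula are positive integers, and the observation that the theorem's conclusion then follows once the Key Lemma handles the minimal tight subfamily, are all correct. Your example of the three facets of $\Delta_3$ correctly explains why Proposition~\ref{md0} cannot substitute for the Key Lemma.

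That said, as a self-contained proof your proposal has exactly one gap, and it is the entire content of the hard direction: the Key Lemma is asserted, not proven, and your sketched plan for it has a structural flaw you should be aware of. The polytope $R:=\conv\bigl(\bigcup_i P_i\bigr)$ is only meaningful after choosing translations of the individual $P_i$ --- the family is given up to independent translations, and the conclusion ``faces of a \emph{common} unimodular simplex'' holds only for a suitable choice. Monotonicity gives $1\le\Vol(R)$ for \emph{every} choice of translations, but producing a choice with $\Vol(R)=1$ is not a refinement step one can expect to extract from saturated dimension inequalities plus Corollary~\ref{mv-innere}; it is essentially equivalent to the Esterov--Gusev classification itself. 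So your plan does not reduce the difficulty, it restates it. Since the paper likewise proves nothing here and cites \cite{EG12}, your proposal matches (indeed exceeds) the paper's level of rigor on everything the paper argues, and is honest about deferring precisely what the paper defers; just do not mistake the secondary-induction sketch for progress on the cited core.
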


Note that the `if'-direction follows from Lemmas~\ref{mv0-lemma} and \ref{mv-induction} together with the monotonicity of the mixed volume, while the `only if'-direction is a highly non-trivial result.

\smallskip

For the proof of Theorem~\ref{md0-low} we need the following simple observation.

\begin{lemma}
Let $P_1, \ldots, P_k$ be faces of the unimodular simplex $\Delta_n := \conv(0,e_1, \ldots, e_n)$ such that 
$\dim(P_I) \ge |I|$ for any $\emptyset \not= I \subseteq [k]$. Then $\iZ(P_I) = \emptyset$ for any $\emptyset \not= I \subseteq [k]$.
\label{key}
\end{lemma}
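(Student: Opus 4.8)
The plan is to fix a nonempty $I \subseteq [k]$ and show directly that $\intr_\Z(P_I) = \emptyset$, using only the single inequality $\dim(P_I) \ge |I|$ (so the full hypothesis is applied one subset at a time). I would encode each face $P_i$ by the set $S_i \subseteq \{0, 1, \ldots, n\}$ of vertices of $\Delta_n$ spanning it, writing $v_0 = 0$ and $v_j = e_j$ for $j \ge 1$; since $\dim(P_i) \ge 1$ we have $|S_i| \ge 2$. Describing points of $\Delta_n$ by barycentric coordinates $(x_0, x_1, \ldots, x_n)$ — nonnegative, summing to $1$, with $x_0$ the weight on the origin — a point lies in $\relint(P_i)$ precisely when its barycentric support is exactly $S_i$.

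The engine of the proof is the identity $\relint(P_I) = \sum_{i \in I} \relint(P_i)$ for Minkowski sums of convex sets. Assuming for contradiction a point $x \in \intr_\Z(P_I)$, I would decompose $x = \sum_{i \in I} x^{(i)}$ with each $x^{(i)} \in \relint(P_i)$, so that in the coordinates $e_1, \ldots, e_n$
\[ x_j = \sum_{\substack{i \in I \\ j \in S_i}} \lambda^i_j, \qquad \lambda^i_j > 0 \ (j \in S_i), \quad \sum_{j \in S_i} \lambda^i_j = 1. \]
Set $V_I := \bigcup_{i \in I} S_i$ and $V_I^+ := V_I \cap \{1, \ldots, n\}$. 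For each $j \in V_I^+$ the coordinate $x_j$ is a strictly positive integer, hence $x_j \ge 1$. Summing over $j$ and exchanging the order of summation gives $\sum_{j \in V_I^+} x_j = |I| - \Lambda_0$ with $\Lambda_0 := \sum_{i \in I} \lambda^i_0 \ge 0$ the total weight placed on the origin, whence $|V_I^+| \le |I| - \Lambda_0$.

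For the opposing bound I would observe that $P_I$ is supported on the coordinates in $V_I^+$, so $\dim(P_I) \le |V_I^+|$; and if no face uses the origin then $P_I$ additionally lies in the hyperplane $\{\sum_j x_j = |I|\}$, improving this to $\dim(P_I) \le |V_I^+| - 1$. In both cases $\dim(P_I) \le |V_I| - 1$, so the hypothesis $\dim(P_I) \ge |I|$ forces $|V_I| \ge |I| + 1$ and therefore $|V_I^+| \ge |I|$. Comparing with $|V_I^+| \le |I| - \Lambda_0$ yields $\Lambda_0 = 0$; but the origin is used by some $S_i$ exactly when $|V_I| = |V_I^+| + 1$, and in that case relative-interiorness forces $\lambda^i_0 > 0$, i.e.\ $\Lambda_0 > 0$ — a contradiction. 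When the origin is not used, $|V_I^+| = |V_I| \ge |I| + 1$ contradicts $|V_I^+| \le |I|$ outright.

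I expect the only real subtlety to be the asymmetric status of the vertex $v_0 = 0$: it contributes no $\R^n$-coordinate and is thus invisible to the coordinate counting that drives the contradiction, yet it both absorbs barycentric mass and counts toward $\dim(P_I)$. Tracking this through the quantity $\Lambda_0$ and the gap between $|V_I|$ and $|V_I^+|$ is precisely what reconciles the two cases. Everything else reduces to the standard relative-interior-of-a-sum identity and elementary counting, so I anticipate no further obstacles.
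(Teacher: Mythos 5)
Your proof is correct, but it takes a genuinely different route from the paper's. The paper argues top-down: with $j := |I|$, it notes $P_I \subseteq j\Delta_n$, places $\intr(P_I)$ inside the relative interior of the unique carrier face $F$ of $j\Delta_n$, identifies $F \cong j\Delta_d$ with $d = \dim(F) \ge \dim(P_I) \ge j$, and concludes $\iZ(P_I) = \emptyset$ from the known hollowness of $j\Delta_d$ for $j \le d$ (i.e., $\codeg(\Delta_d) = d+1 > j$). You instead work bottom-up from the identity $\relint(P_I) = \sum_{i \in I} \relint(P_i)$, decomposing a putative interior lattice point and running an explicit barycentric count: positivity forces $x_j \ge 1$ on the support $V_I^+$, giving $|V_I^+| \le |I| - \Lambda_0$, while the dimension hypothesis gives $\dim(P_I) \le |V_I| - 1$ and hence $|V_I^+| \ge |I|$, with the origin's mass $\Lambda_0$ closing both cases. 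The two arguments are structurally cousins — your support set $V_I$ is precisely the vertex set of the paper's carrier face (so $d = |V_I| - 1$, and your inequality $|V_I| - 1 \ge \dim(P_I) \ge |I|$ is the paper's $d \ge j$), and your final counting contradiction amounts to an inline proof that $j\Delta_d$ is hollow for $j \le d$. What the paper's version buys is brevity and modularity, delegating to the standard codegree fact and letting unimodular equivalence of $F$ with $j\Delta_d$ absorb the asymmetry of the origin vertex that you must track by hand via $\Lambda_0$ and the gap between $|V_I|$ and $|V_I^+|$; what yours buys is a self-contained, coordinate-level argument whose only imported ingredient is the standard identity $\relint(A+B) = \relint(A) + \relint(B)$. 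All your steps check out, including the characterization of $\relint(P_i)$ by exact barycentric support (valid because the $P_i$ are faces of a simplex); the hypothesis $\dim(P_i) \ge 1$ is in fact never needed in your argument, which is harmless.
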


\begin{proof}
Let $\emptyset \not= I \subseteq [k]$, and $j := |I|$. We note that $P_I \subseteq j \Delta_n$. Hence, there exists a unique face $F$ of $j \Delta_n$ such that $\intr(P_I) \subseteq \intr(F)$. 
Let $d := \dim(F)$, so $F \cong j \Delta_d$. Since by assumption $1 \le j \le \dim(P_I) \le d$, we get $\iZ(j \Delta_d) = \emptyset$, hence, $\iZ(P_I)= \emptyset$.
\end{proof}

\begin{proof}[Proof of Theorem~\ref{md0-low}]

The direction (1) $\ra$ (2) follows directly from Corollary~\ref{mv-innere}. For (2) $\ra$ (1) 
we can assume by Theorem~\ref{mv=1} and Lemma~\ref{mv0-lemma} that $P_1, \ldots, P_k$ (for some $1 \le k \le n$) are faces 
of the unimodular simplex $\Delta_k \subset \R^k$, 
$\dim(P_{[k]}) = k$, and $\dim(P_I) \ge |I|$ for any $\emptyset \not= I \subseteq [k]$. In particular, Lemma~\ref{key} yields $\iZ(P_I) = \emptyset$ for any $\emptyset \not= I \subseteq [k]$. 
This proves the statement for $k=n$, so let 
$k<n$. Considering the projection along $\R^k$, Theorem~\ref{mv=1} implies $\MV(\Pb_{k+1}, \ldots, \Pb_n)=1$, thus, $\md(\Pb_{k+1}, \ldots, \Pb_n) =0$ by induction. Let us assume that there exists $ \emptyset \not=I \subseteq [n]$ such that $\iZ(P_I) \not=\emptyset$, in particular, $I \not\subseteq [k]$. We observe that $\Pb_I$ equals $\Pb_{I \cap \{k+1, \ldots, n\}}$ up to a translation. Hence, $\iZ(\Pb_{I \cap \{k+1, \ldots, n\}}) \not=\emptyset$, a contradiction to $\md(\Pb_{k+1}, \ldots, \Pb_n) = 0$.
\end{proof}

\begin{remark}{\rm Let us note the following observation: If $P_1, \ldots, P_k$ in $\R^n$ such that $\mcd(P_1, \ldots, P_k) = k+1$, then $k \le \dim(P_{[k]})$ by nonnegativity of the mixed degree.}
\label{dim-remark}
\end{remark}

\begin{proof}[Proof of Theorem~\ref{mdeg0-guess}]

We first consider the case $m=n+1$. Here, $\iZ(P_I)=\emptyset$ for any $\emptyset \not=I \subsetneq [n+1]$, and $\iZ(P_{[n+1]}) \not=\emptyset$. In fact, since as in the proof of Proposition~\ref{nonneg} we have $\MV(P_1, \ldots, P_{n-1},P_n+P_{n+1}) = 2$, Corollary~\ref{mv-innere} implies that $|\iZ(P_{[n+1]})|~=~1$. By a well-known result in the geometry of numbers \cite{LZ91} there are up to unimodular equivalence only a finite number of lattice polytopes with one interior lattice point in fixed dimension $n$. This implies that there are only finitely many families $P_1, \ldots, P_{n+1}$ with $\md(P_1, \ldots, P_{n+1})=0$ up to our identification. 

So, let $m>n+1$. Remark~\ref{dim-remark} applied to $P_1, \ldots, P_n$ implies $\dim(P_{[n]})=n$, so $P_1, \ldots, P_{n+1}$ is proper. Let us fix $P_1, \ldots, P_{n+1}$ as one of the finitely many types in above argument. Let $n+1<i\le m$. By similarly considering $P_1, \ldots, P_n, P_i$ we deduce that there are only finitely many possibilities (say, $N$ many) for $P_i$ up to translation. 
Note that $N$ only depends on $n$. 

Hence, we may assume that $m > n+1+(n-1) N$. By the pidgeonhole principle, there exist $P_{i_1}, \ldots, P_{i_n}$ (with $n+2 \le i_1 < \cdots < i_n \le n+2+(n-1) N$) that are all equal to the same lattice polytope $Q$ up to translations. Again, Remark~\ref{dim-remark} applied to $P_{i_1}, \ldots, P_{i_n}$ yields that $\dim(Q)=n$. Moreover, Corollary~\ref{mv-innere} implies that  $\Vol(Q)=\MV(Q, \ldots, Q){=1}$, i.e., $Q$ is a unimodular $n$-simplex.

Let $i \in [m]\backslash\{i_1, \ldots, i_n\}$ such that $P_i$ is not contained in $Q$ up to translations. We will show that this case cannot occur. Again, Lemma~\ref{mv-innere} yields  $\MV(Q, \ldots, Q, P_i)=1$ (where $Q$ is chosen $n-1$ times). Now, Theorem~\ref{mv=1} implies that there exists $1 \le k \le n$ such that $k$ of the polytopes $Q, \ldots, Q, P_i$ are contained up to translations in a $k$-dimensional unimodular simplex $S$ and the projection of the other $(n-k)$ polytopes along this simplex yields again a family of mixed volume one. 
Assume $k > 1$. In this case, one of the $Q$'s would be contained in $S$ up to translation, hence $S$ would be equal to $Q$ up to translation, so $k=n$, and $P_i$ would be contained in $Q$ up to translation, a contradiction. Therefore, $k=1$, and $P_i$ is contained in $S$ up to translation. Since $P_i$ is not a point, 
we see that $P_i=S$ must be a lattice interval containing two lattice points. Since projecting $Q, \ldots, Q$ along $P_i$ (via a lattice projection $\pi_i$) yields again a family of full-dimensional lattice polytopes of mixed volume one, Proposition~\ref{md0} implies that $\pi_i(Q)$ is an $(n-1)$-dimensional unimodular simplex. In particular, we see that there must be two vertices of $Q$ that get mapped to the same 
vertex of $\pi_i(Q)$. Hence, since $P_i$ lies in a fiber of $\pi_i$, we deduce that $P_i$ is up to a translation an edge of $Q$, again a contradiction.

Finally, let us consider the situation that all lattice polytopes are contained in a unimodular $n$-simplex $Q$ up to translations. Because of $\codeg(\Delta_j)=j+1$, no face of $Q$ of dimension $j<n$ can appear $j+1$ times. This proves the last statement in the theorem. It remains to observe the following easily verified binomial identity
\[\sum_{i=1}^{n-1} i \, \binom{n+1}{i+1} = (2^n-1)(n-1).\]
\end{proof}

\subsection{Mixed degree at most one}

\label{sopru-sec}

Let $P_1, \ldots, P_m$ be lattice polytopes in $\R^n$. Let us define for $\emptyset \not= I \subseteq [m]$  
\[g(I) := \sum_{\emptyset \not= J \subseteq I} (-1)^{\card{I}-\card{J}} \;|\intr_\Z(P_J)|.\]

\begin{theorem}[Khovanskii' 78, Bihan '14] 
If $P_1, \ldots, P_m$ are $n$-dimensional lattice polytopes, then $g([m])$ is nonnegative.
\label{genus-formula}
\end{theorem}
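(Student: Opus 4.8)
The plan is to recognize $g([m])$ as Bihan's \emph{discrete mixed volume} and to deduce its nonnegativity from his theorem \cite{Bihan}, using Ehrhart--Macdonald reciprocity to tie the interior--point sum defining $g$ to the all--lattice--point inclusion--exclusion of Proposition~\ref{mv-all}. This identification is what justifies the name ``mixed volume'' and, crucially, pinpoints where full-dimensionality enters.

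First I would package the family into the dilation polynomial
\[
D(t) := \sum_{J \subseteq [m]} (-1)^{m-|J|}\, \ehr_{P_J}(t), \qquad \ehr_{P_\emptyset}\equiv 1,
\]
so that $D(t) = \sum_{J}(-1)^{m-|J|}\,|tP_J \cap \Z^n|$ for positive integers $t$ is an inclusion--exclusion count in the spirit of Proposition~\ref{mv-all}. Since the $P_i$ are $n$-dimensional, every $P_J$ with $J\neq\emptyset$ is $n$-dimensional; hence $\ehr_{P_J}$ has degree $n$ and reciprocity gives $\ehr_{P_J}(-1) = (-1)^n\,|\intr_\Z(P_J)|$ with one and the same sign $(-1)^n$ for all $J$. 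Evaluating at $t=-1$ therefore yields the clean identity
\[
D(-1) = (-1)^m + (-1)^n\, g([m]),
\]
exactly paralleling the derivation of Corollary~\ref{mv-innere} from Proposition~\ref{mv-all}. This certifies that $g([m])$, up to a uniform sign and the empty--set constant, is the genuine discrete mixed volume of the family, and not the raw all--lattice--point sum (which differs already for a one--element family).

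Having made this identification, I would invoke Bihan's nonnegativity of the discrete mixed volume \cite{Bihan}, which yields $g([m]) \ge 0$ at once. For completeness I would also recall Khovanskii's original, geometric route: choosing generic Laurent polynomials with Newton polytopes $P_1,\dots,P_m$, the quantity $g([m])$ computes a cohomological genus of the associated complete intersection in the torus, and being the dimension of a vector space it is automatically a nonnegative integer. Either route establishes the theorem.

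The genuinely hard part is the nonnegativity input itself, which I would import rather than reprove: Bihan's argument proceeds through an irrational brick decomposition of the Minkowski sums that makes the alternating count telescope into a manifestly nonnegative number of cells. The one subtlety I would flag is that full-dimensionality is not a convenience but a necessity: it is precisely what collapses the reciprocity signs $(-1)^{\dim P_J}$ to the constant $(-1)^n$, allowing the alternating sum to assemble into a single discrete mixed volume. Dropping it lets the signs vary and $g([m])$ can become negative, as the family of Example~\ref{two-dim-example} shows, where a unimodular triangle together with a long parallel segment already produces a negative value.
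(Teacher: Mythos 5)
Your high-level plan coincides with the paper's treatment: Theorem~\ref{genus-formula} is not proved in the paper at all, but imported --- Bihan's Theorem~4.15(4) in \cite{Bihan} \emph{is} the statement, proved there purely combinatorially via irrational mixed decompositions, with the Khovanskii genus interpretation recorded only in a remark. The trouble is the bridge you insert. Your reciprocity identity
\[
D(-1) \;=\; (-1)^m + (-1)^n\, g([m])
\]
is correct (and mirrors the derivation of Corollary~\ref{mv-innere} from Proposition~\ref{mv-all}), but Bihan's nonnegativity of the \emph{discrete mixed volume} is the inequality $D(1)\ge 0$ (equivalently, applied to dilated families, $D(t)\ge 0$ for $t\in\Z_{\ge 1}$), and a polynomial that is nonnegative on the positive integers can take an arbitrary value at $t=-1$. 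So the claim that your identification yields $g([m])\ge 0$ ``at once'' is a non sequitur: $g([m])$ is the reciprocal evaluation of the inclusion--exclusion Ehrhart polynomial, not ``the genuine discrete mixed volume of the family'' up to sign and constant --- there is no family whose discrete mixed volume it computes. That the two statements are genuinely independent is visible from the very example you cite: discrete-mixed-volume nonnegativity needs no full-dimensionality at all (already for $m=2$ one has $|(P_1+P_2)\cap\Z^n|\ge|P_1\cap\Z^n|+|P_2\cap\Z^n|-1$ for arbitrary lattice polytopes), while $g([m])\ge 0$ fails in Example~\ref{two-dim-example}. You have therefore mislocated where full-dimensionality enters: it is a hypothesis of the interior-lattice-point inequality itself, not merely a device to uniformize reciprocity signs. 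What must be cited is the \emph{separate} item of Bihan's theorem asserting nonnegativity of the alternating interior-point sum for full-dimensional families --- i.e., Theorem~\ref{genus-formula} verbatim, proved alongside, not as a corollary of, the discrete-mixed-volume inequality. As written, if \cite{Bihan} contained only $D(1)\ge 0$, your argument would not establish the theorem.

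Your fallback through \cite{Kho78} is the paper's remark, but it carries the restriction $m\le n$, which you omit: for $m>n$ a generic system $f_1=\cdots=f_m=0$ has empty solution set in $(\C^*)^n$, so the geometric-genus interpretation of $g([m])$ is unavailable, whereas Theorem~\ref{genus-formula} is stated for all $m$. So neither of your two routes, as formulated, covers the full statement; the fix is simply to quote Bihan's interior-point inequality directly and, if you keep the geometric argument, to flag the range $m\le n$ in which it applies.
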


Bihan's proof \cite[Theorem~4.15(4)]{Bihan} is purely combinatorial.

\begin{remark}{\rm
Let us assume $m \le n$, and explain why nonnegativity follows from the algebro-geometric meaning of $g([m])$. Given $P_1, \ldots, P_m$, these lattice polytopes are the Newton polytopes of generic Laurent polynomials $f_1, \ldots, f_m \in \C[x_1^{\pm}, \ldots, x_n^{\pm}]$. We consider the set $X$ of their common solutions in the algebraic torus $(\C^*)^n = (\C\backslash\{0\})^n$. Let $\bar{X}$ be its Zariski closure in the projective toric variety associated to the normal fan of $P_{[m]}$. Then $g([m])$ equals the geometric genus of $\bar{X}$ (i.e., $h^{n-m,0}(\bar{X})$), see \cite{Kho78}. 
}
\end{remark}

\begin{example}{\rm As we see again from Example~\ref{two-dim-example}, the full-dimensionality assumption cannot be removed from Theorem~\ref{genus-formula}. In this situation, $|\intr_\Z(P_{[2]})|=|\intr_\Z(P_1)| = 0$, while 
$|\intr_\Z(P_2)|$ can be arbitrarily large. Hence, $g([2])$ can be arbitrarily negative.}
\end{example}

For $\emptyset \not= I \subseteq [m]$, let us now consider the following variant of $g(I)$:

\[\tilde{g}(I) := \sum_{\emptyset \not= J \subsetneq I} (-1)^{\card{I}-1-\card{J}} \;|\intr_\Z(P_J)|.\]

\begin{lemma}
For $\emptyset \not= I \subseteq [m]$,
\[\tilde{g}(I)=\sum_{\emptyset \not= J \subsetneq I} g(J).\]
\label{f-lemma}
\end{lemma}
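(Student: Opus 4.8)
The plan is to prove the identity
\[
\tilde{g}(I)=\sum_{\emptyset \not= J \subsetneq I} g(J)
\]
by expanding both sides in terms of the quantities $|\intr_\Z(P_J)|$ and comparing coefficients. Since $g$ and $\tilde{g}$ are both defined as signed sums of the $|\intr_\Z(P_J)|$, the statement reduces to a purely combinatorial identity about alternating sums over the boolean lattice of subsets of $I$.

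First I would substitute the definition of $g(J)$ into the right-hand side, yielding a double sum
\[
\sum_{\emptyset \not= J \subsetneq I}\;\sum_{\emptyset \not= K \subseteq J} (-1)^{\card{J}-\card{K}}\,|\intr_\Z(P_K)|.
\]
Then I would interchange the order of summation to group by the innermost set $K$. For a fixed $\emptyset \not= K \subseteq I$, the coefficient of $|\intr_\Z(P_K)|$ on the right becomes $\sum_{K \subseteq J \subsetneq I} (-1)^{\card{J}-\card{K}}$, where $J$ ranges over subsets with $K \subseteq J$ and $J \ne I$. The goal is to show this coefficient matches the coefficient $(-1)^{\card{I}-1-\card{K}}$ appearing on the left-hand side (for $K \subsetneq I$), and $0$ for $K = I$ (since $K=I$ contributes nothing to $\tilde g$).

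The key computation is the inner alternating sum. Writing $C := I \setminus K$ with $c := \card{C} = \card{I}-\card{K}$, the sets $J$ with $K \subseteq J \subseteq I$ correspond bijectively to subsets of $C$, and $(-1)^{\card{J}-\card{K}}$ becomes $(-1)^{\card{J \cap C}}$. The \emph{unrestricted} alternating sum $\sum_{S \subseteq C} (-1)^{\card{S}}$ vanishes whenever $c \ge 1$; here we must exclude only the top term $J = I$ (i.e.\ $S = C$), which carries sign $(-1)^{c}$. Hence for $K \subsetneq I$ the restricted sum equals $0 - (-1)^{c} = -(-1)^{\card{I}-\card{K}} = (-1)^{\card{I}-1-\card{K}}$, exactly the coefficient in $\tilde g(I)$. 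For $K = I$ the only admissible $J$ would be $J=I$ itself, which is excluded, so the coefficient is $0$, consistent with $\tilde g$ omitting the term $J=I$.

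I do not anticipate a genuine obstacle: this is a routine M\"obius-type inversion on the boolean lattice. The only point demanding care is the handling of the two boundary conditions simultaneously --- the exclusion of $J = I$ in the outer sum of the right-hand side and the exclusion of $J = I$ in $\tilde g(I)$ --- so that the bookkeeping of signs and the vanishing of the unrestricted alternating sum line up correctly for every $K$. Once the coefficient of each $|\intr_\Z(P_K)|$ is shown to agree on both sides, the identity follows immediately.
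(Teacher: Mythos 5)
Your proof is correct, and the sign bookkeeping checks out: for fixed $\emptyset \not= K \subsetneq I$ with $c := \card{I}-\card{K} \ge 1$, the restricted alternating sum $\sum_{K \subseteq J \subsetneq I} (-1)^{\card{J}-\card{K}} = -(-1)^{c} = (-1)^{\card{I}-1-\card{K}}$ is exactly the coefficient in $\tilde{g}(I)$, and the case $K=I$ is vacuous on both sides. Your route differs from the paper's in execution, though not in spirit. The paper argues in two lines: it invokes M\"obius inversion as a black box, namely $|\intr_\Z(P_I)| = \sum_{\emptyset \not= J \subseteq I} g(J)$, and combines it with the one-line observation $\tilde{g}(I) = |\intr_\Z(P_I)| - g(I)$ (which follows by splitting off the $J=I$ term of $g(I)$), so that subtracting $g(I)$ from the inversion formula gives the claim immediately. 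You instead expand the right-hand side, interchange the double sum, and verify the coefficient of each $|\intr_\Z(P_K)|$ by the vanishing of the unrestricted alternating sum over the boolean lattice --- which is precisely the computation that proves M\"obius inversion on the boolean lattice, carried out inline. What your version buys is self-containment: no appeal to inversion is needed, and every sign is checked by hand. What the paper's version buys is brevity and a conceptual reading --- $\tilde{g}$ is the ``strict-subset accumulation'' of $g$ because $g$ is the M\"obius transform of $\card{\intr_\Z(P_\bullet)}$ --- which is the viewpoint actually used in the subsequent proof of Theorem~\ref{lower-bound} and Proposition~\ref{equality}, where nonnegativity and vanishing of $\tilde{g}([n])$ are read off from the summands $g(J)$.
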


\begin{proof}
M\"obius-inversion states that
\[|\intr_\Z(P_I)|  = \sum_{\emptyset \not=J\subseteq I} g(J).\]
Therefore, the statement follows from $\tilde{g}(I) = |\intr_\Z(P_I)|-g(I)$.
\end{proof}

We can now give the combinatorial proof of Soprunov's lower bound theorem and the characterization of its equality case.

\begin{proof}[Proof of Theorem~\ref{lower-bound} and Proposition~\ref{equality}]

Let $P_1, \ldots, P_n$ be $n$-dimensional lattice polytopes. In this case, Theorem~\ref{genus-formula} implies $g(I) \ge 0$ for any $\emptyset \not= I \subseteq [n]$. Hence, Lemma~\ref{f-lemma} yields $\tilde{g}([n]) \ge 0$. Now, rewriting Corollary~\ref{mv-innere} yields
\[\MV(P_1, \ldots, P_n)-1 = g([n])=|\intr_\Z(P_{[n]})|-\tilde{g}([n]) \le |\intr_\Z(P_{[n]})|.\]
In particular, we have equality if and only if $\tilde{g}([n])=0$. By Lemma~\ref{f-lemma} this is equivalent to $g(I)=0$ for all $\emptyset \not= I \subsetneq [n]$. By the definition of $g(I)$, this just means that 
$|\intr_\Z(P_I)|=0$ for any $\emptyset \not= I \subsetneq [n]$ which is equivalent to mixed degree $\le 1$.
\end{proof}

\bibliographystyle{acm}
\bibliography{bibliography}

\end{document}